\def \R{\mathbb{R}}
\def \C{\mathbb{C}}
\def \Z{\mathbb{Z}}
\def \co{\colon}
\def \id{\mathop\mathrm{id}}
\newcommand\erase{\bgroup\markoverwith{\textcolor{red}{\rule[.5ex]{2pt}{0.4pt}}}\ULon}
\newtheorem{theorem}{Theorem}[section]
\newtheorem{lemma}[theorem]{Lemma}
\newtheorem{corollary}[theorem]{Corollary}
\theoremstyle{definition}
\newtheorem{definition}[theorem]{Definition}
\newtheorem{remark}[theorem]{Remark}
\newtheorem{example}[theorem]{Example}
\newenvironment{dedication}
  {
   \itshape             
   \raggedright          
  }
  {
  }
\title{On the number of components of folds of image simple fold maps}
\author{Rustam Sadykov}
\author{Osamu Saeki}
\subjclass[2020]{58K30; 58K65, 57R45}
\keywords{fold map, open book decomposition, round fold map, image simple map, 
parity of the number of components of singular point set}
\date{}
\begin{document}
\begin{dedication}
    This paper is dedicated to the memory \\ of our esteemed colleague Professor 
    \\ Maria Aparecida Soares Ruas. \vspace{3mm}
\end{dedication}
\begin{abstract}
    A smooth map between manifolds is said to be \emph{image simple} if its restriction to its 
    singular point set is a topological embedding. 
    We study the parity of the number of connected components of the singular point set for 
    image simple fold maps from a closed manifold of dimension $\ge 2$ to a surface. 
    It is known that this parity is a homotopy invariant when the source manifold is of even dimension
    and the target surface is orientable.
    In this paper we show that for an arbitrary image simple fold map from a closed odd-dimensional manifold 
    of dimension $\ge 3$ to a (possibly non-orientable) surface, this parity is not a homotopy invariant. 
    We give two constructive proofs: one uses
    techniques from open book decompositions and round fold maps, and the other uses allowable moves.
    The constructed homotopies add one new component to the singular point set.
    We also give a new proof of the homotopy invariance of the
    parity of the number of connected components of the singular point set for 
    image simple fold maps from a closed even-dimensional manifold to an orientable surface
    together with an example showing that this does not hold for maps into non-orientable
    surfaces in general.
\end{abstract}

\maketitle

\section{Introduction}\label{s:intro}

Given a smooth map $f\co M\to N$ between manifolds of dimensions $m$ and $n$, respectively, with $m\ge n$, 
we say that $x \in M$ is a \emph{singular point} if 
the differential 
$df_x \co T_xM \to T_{f(x)}N$ does not have the maximal rank $n$.
It is known that when $f$ is a \emph{generic} smooth map into a surface $N = S$, 
the set $\Sigma(f)$ of its singular points
consists of folds and cusps (defined in Section~\ref{s:preliminaries}), see \cite{T, Wh}. 
More specifically, for a given \emph{closed} manifold $M$ of dimension $\ge 2$
and a surface $S$,
the set of smooth maps $f \co M \to S$ which have only folds and cusps as their
singular points is open and dense in the space $C^\infty(M, S)$ of smooth maps of $M$ into $S$
endowed with the Whitney $C^\infty$ topology (for example, see \cite{GG}).
In this sense, generic maps to surfaces can be considered as generalizations of
Morse functions on manifolds which are generic maps into the $1$-dimensional
manifold $\R$. Therefore, it is expected that
the topology of the source manifold $M$ is somehow related to
various properties of generic maps $M \to S$ on that manifold.


In this paper, we focus on generic maps to surfaces without cusp singularities.
A smooth map $f \co M \to S$ that has only folds
as its singularities is called a \emph{fold map}.
Note that if $M$ is a closed manifold, 
then the singular point set $\Sigma(f)$ is a finite disjoint union of circles in $M$, 
and the restriction $f|_{\Sigma(f)}$ is an immersion. 
Among fold maps, we are mainly interested in 
those whose singular point sets have well-behaved images.
We say that a smooth map $f$ between manifolds is \emph{image simple} if the restriction of $f$ to the set 
$\Sigma(f)$ of its singular points is a topological embedding. 
In other words, the image $f(\Sigma(f))$ of the singular point set has no self-intersections 
in the target manifold. 
Image simple smooth generic maps
have recently attracted some detailed studies (see, for example, 
\cite{BSPNAS, BS1, Ki14a, Ki14b, Ki14c, Sae19, Sae25}). 

Thus, our main objective 
is to study certain topological properties of image simple fold maps from a closed manifold $M$ of dimension 
$m \ge 2$ to a surface $S$. The image $f(\Sigma(f))$ of the singular point set $\Sigma(f)$ of such a map
is a finite disjoint union of embedded closed curves in $S$. 
The main aim of the paper is to study one of the basic characteristics of an image simple fold map $f$, 
namely the parity of the number $\#|\Sigma(f)|$ of components of the singular point
set, and specifically, whether this parity is a homotopy invariant of $f$ or not.

\begin{example}\label{example1}
The restriction of the standard projection $\R^3\to \R^2$ to the standard unit 
sphere defines an image simple fold map $f \co S^2\to \R^2$ with $\#|\Sigma(f)|=1$. 
For any prescribed positive odd integer $k$,
it is easy to construct an image simple fold map $f_k$ of $S^2$ into $\R^2$ with $\#|\Sigma(f_k)| = k$; 
however, no such map exists for any positive even integer $k$
by \cite[Theorem~1.1]{KS} or by Theorem~\ref{thm:even} of the present paper.
In fact, according to \cite{Y}, there exists no fold map $g \co S^2 \to \R^2$
with $\# |\Sigma(g)|$ even, even if we allow $g$ not to be image simple.
See also Remark~\ref{rem:surface}.
\end{example}

The above question is a generalization of the first of 
the two Takase problems formulated in \cite[Problem 3.2]{Sae19}. 
Takase considered image simple generic maps $f\co S^3\to S^2$ without definite fold singular points
with $\Sigma(f) \neq \emptyset$.
(For the definition of a definite fold singularity, refer to Section~\ref{s:preliminaries}.)
The first Takase problem asks if the number $\#|\Sigma(f)|$ of components of $\Sigma(f)$ is 
congruent modulo two to $d+1$, 
where $d\in \Z\cong  \pi_3(S^2)$ is the homotopy class represented by $f$. 
The second problem asks if the minimum number of components of $\Sigma(f)$,
over all $f$ as above in the given homotopy class $d\in \Z\cong  \pi_3(S^2)$,
is equal to $|d|+1$. 


The second author \cite{Sae19a} later answered the first Takase problem in the negative by 
constructing two homotopic image simple fold maps $f$ and $g$ without definite folds such that 
$\#|\Sigma(f)|\not\equiv \#|\Sigma(g)| \pmod 2$, 
see Section~\ref{s:2a} for details.
Thus, in the case of image simple fold maps of manifolds of dimension $3$ to surfaces, 
the parity of the number of fold components is not a homotopy invariant. 
It is worth mentioning that the second author constructed an explicit generic
homotopy $F\co M\times [0,1]\to S\times [0,1]$ 
between $f$ and $g$. 
The singular point set of a generic
homotopy between fold maps to a surface is, in general, a smooth surface properly embedded in $M\times [0,1]$, 
and we see that for the specific generic homotopy $F$ constructed in \cite{Sae19a},
the surface $\Sigma(F)$ of the singular points of $F$ is the disjoint union of $\Sigma(f)\times [0,1]$ and a M\"obius band. 
Furthermore, the restriction of $F$ to the M\"obius band has a triple self-intersection point. 

Note that every homotopy between image simple fold maps can
be approximated arbitrarily well by a \emph{generic homotopy} (for details, see \S\ref{s:moves}).
Therefore, when two image simple fold maps are homotopic, we may assume 
that the homotopy is generic.

The mentioned results raised the broader question of identifying the precise conditions under which 
the parity of the number of fold components 
is a homotopy invariant. 
It was partially answered by the first author and Kahmeyer in \cite{KS}, who proved that 
for an image simple fold map $f\co M\to S$ of 
a closed manifold $M$ of dimension $\ge 2$ to a surface $S$, the parity of the number of 
connected components of $\Sigma(f)$ does not change under homotopy $F\co M\times [0,1]\to S\times [0,1]$ 
if any of the following conditions holds: 
\begin{enumerate}
\item the dimension of $M$ is even and $S$ is orientable (refer to
Theorem~\ref{thm:even} of the present paper as well),
\item $F$ is generic and
the singular point set $\Sigma(F)$ of the homotopy is an orientable surface, or 
\item $F$ is generic and the restriction $F|_{\Sigma(F)}$ of 
homotopy to its singular point set does not have triple self-intersection points, 
and either $M$ is a closed orientable manifold of dimension $3$ and $S$ is orientable, or
$M$ is a closed manifold of odd dimension $m>2$ and $S$ is $\R^2$ or $S^2$. 
\end{enumerate}

In the present paper we will study the cases excluded in the above result (1):
when the dimension of $M$ is odd and is at least $3$ and $S$ is an arbitrary surface, and when
the dimension of $M$ is even and $S$ is a non-orientable surface. 
Our first main result shows that in the odd-dimensional case, the parity is not a homotopy invariant
as follows.

\begin{theorem}\label{th:main}
Let $M$ be a closed odd-dimensional manifold with
$m = \dim M \ge 3$, and $S$ be a possibly non-orientable surface.
Then, for every image simple fold map $f\co M\to S$, there exists a homotopy $F$ connecting $f$ 
to another image simple fold map $g$ such that $\#|\Sigma(f)|$ and $\#|\Sigma(g)|$ have different parities. 
Furthermore, the homotopy $F$ can be chosen so that it is supported in an arbitrarily small 
$m$-disc in $M$.
\end{theorem}

In fact, we will construct an explicit 
generic homotopy $F\co M\times [0,1]\to S\times [0,1]$ 
satisfying the conclusion of Theorem~\ref{th:main}. 
Its singular point set $\Sigma(F)$ 
contains a M\"{o}bius band component, and $F|_{\Sigma(F)}$ 
has a triple self-intersection point.
This shows that conditions (2) and (3) above are necessary in general. 
On other hand, since Theorem~\ref{th:main} permits the target surface $S$ to be non-orientable, 
it is not guaranteed that every generic homotopy satisfying 
the conclusion of the theorem
must have a triple self-intersection point.
However, if we remove the condition that
the homotopy should be supported in a small disc, then, as we
will see below, in the case where $M$ is even-dimensional, and both $M$
and $S$ are non-orientable, the absence of a triple self-intersection
is not a necessary condition.

Our second main theorem is the following.

\begin{theorem}\label{thm:main2}
For every $n \geq 1$, there exist a closed $2n$-dimensional non-orientable
manifold $M$ and image simple fold maps $f$ and $g \co M \to \mathcal{M}$
to the open M\"obius band $\mathcal{M}$ such that $f$ and $g$ are homotopic, but
$\# |\Sigma(f)| \not\equiv \# |\Sigma(g)| \pmod{2}$.
\end{theorem}

We also construct an explicit generic
homotopy $F$ between $f$ and $g$ in Theorem~\ref{thm:main2}
such that the singular point set $\Sigma(F)$ is non-orientable, but that
it has no triple self-intersections.
This again shows that condition (2) above is necessary in general: however,
condition (3) is not necessary in general.

To summarize, let $M$ be a closed manifold of dimension at least $2$, and $S$ be a surface. 
Let $F$ be a (generic) homotopy between
two image simple fold maps $f$ and $g\co M\to S$. 
Suppose that the dimension of $M$ is even. 
\begin{itemize}
\item If $S$ is orientable, then the parity is a homotopy invariant by \cite{KS} 
or by Theorem~\ref{thm:even}.
\item If $S$ is non-orientable, then the parity is not a homotopy invariant by Theorem~\ref{thm:main2}. 
\item If $\# |\Sigma(f)| \not\equiv \# |\Sigma(g)| \pmod{2}$, then $\Sigma(F)$ is non-orientable, 
but $F|_{\Sigma(F)}$ may or may not have triple self-intersection points by \cite{KS} and 
by Theorem~\ref{thm:main2}. 
\end{itemize}

Suppose that the dimension $m$ of $M$ is odd. 
\begin{itemize}
\item The parity of the number of fold components 
is not a homotopy invariant in general by \cite{Sae19a} in the case $m=3$ and 
by Theorem~\ref{th:main} in the case $m\ge 3$. 
\item If $\Sigma(F)$ is an orientable surface, then the parity is an
invariant by \cite{KS}.
\end{itemize}

Furthermore, suppose that either $M$ is a closed orientable manifold of dimension $3$ and $S$ is orientable, or
$M$ is a closed manifold of odd dimension $m>2$ and $S$ is $\R^2$ or $S^2$. 
\begin{itemize}
\item If $\# |\Sigma(f)| \not\equiv \# |\Sigma(g)| \pmod{2}$, then $F|_{\Sigma(F)}$ 
has a triple self-intersection point by \cite{KS}. 
\end{itemize}
\setstcolor{blue}
Note that the last statement does not hold in general without the
assumption on $S$ according to Theorem~\ref{thm:main2}.

The paper is organized as follows. In Section~\ref{s:preliminaries} we prepare
some basic notions necessary in the paper.
In Section~\ref{s:moves}, for the reader's convenience, based on \cite{Cerf, HW, KS, Sae25},
we summarize the moves for generic maps together with sufficient
conditions for some of them to be allowable, which will be used in
later sections for constructing explicit homotopies.
In Section~\ref{s:obd} we use open book decompositions as well as round fold maps 
of odd-dimensional spheres to prove Theorem~\ref{th:main} 
producing a homotopic modification $g$ of $f$ with $\#|\Sigma(g)|\equiv \#|\Sigma(f)|+1 \pmod{2}$.  
In Section~\ref{s:2a} we give another proof of Theorem~\ref{th:main} along the argument in \cite{Sae19a},
using allowable moves (or always-realizable moves) as listed in Section~\ref{s:moves}.
In Section~\ref{s:even}, we first give a new proof of the homotopy invariance of the
parity of the number of connected components of the singular point set for 
image simple fold maps from a closed even-dimensional manifold to an orientable surface
(see Theorem~\ref{thm:even}).
Then, we prove Theorem~\ref{thm:main2} by constructing an explicit example.

Throughout the paper, manifolds and maps between them are smooth of
class $C^\infty$ unless otherwise specified. 
The symbol ``$\cong$'' denotes
an appropriate isomorphism between algebraic objects or a diffeomorphism between manifolds.

\section{Preliminaries}\label{s:preliminaries}

Let $M$ be a smooth closed manifold of dimension $m \geq 2$,
and $S$ a smooth surface.

\begin{definition}\label{def:fold}
A singular point $p$ of a smooth map $f \co M \to S$
that can be described by the normal form
$$(t, x_1, x_2, \ldots, x_{m-1})
\mapsto (X, Y) =
(t, -x_1^2 - \cdots -x_i^2 + x_{i+1}^2 + \cdots + x_{m-1}^2)$$
with respect to appropriate coordinates around $p$ and $f(p)$
is called a
\emph{fold singularity} (or a \emph{fold}), where the integer
$i$ is called the \emph{index} with respect to the $(-Y)$-direction
(see Fig.~\ref{fig82}).
When we choose the $Y$-coordinate so that
$0 \leq i \leq \left\lfloor (m-1)/2 \right\rfloor$ holds,
$i$ is called the \emph{absolute index}.
Note that the absolute index of a fold is well-defined and
does not depend on the choice of local coordinates.

When $i=0$ or $m-1$, the singular point is called a \emph{definite fold}.
Otherwise, it is called an \emph{indefinite fold}.

A smooth map $f \co M \to S$ that has only folds
as its singularities
is called a \emph{fold map}.
\end{definition}

\begin{figure}[t]
\centering
\includegraphics[width=0.9\textwidth,
keepaspectratio]{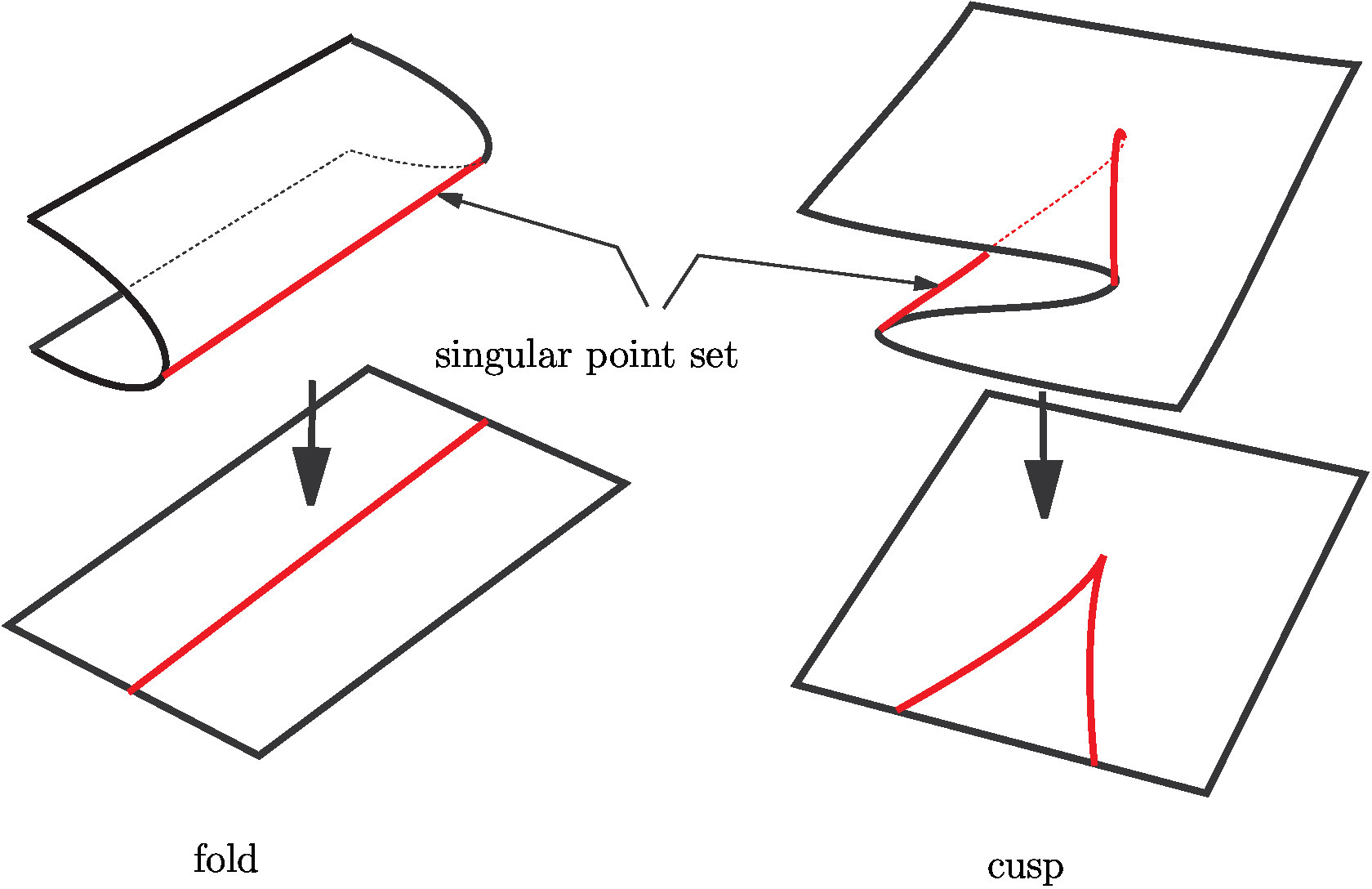}
\caption{Fold and cusp singularities for $m=2$}
\label{fig82}
\end{figure}

For a smooth map $f \co M \to S$, 
$\Sigma(f)$ denotes the set of singular points of $f$.
If $f$ is a fold map and the source manifold $M$ is closed
(compact and without boundary), then $\Sigma(f)$
is a disjoint union of finitely many circles, and
$f|_{\Sigma(f)}$ is an immersion.

\begin{definition}
A singular point of a smooth map $M \to S$
that has the normal form 
$$(t_1, t_2, x_1, x_2, \ldots, x_{m-2})$$
$$\mapsto (X, Y) =
(t_1, t_2^3 + t_1 t_2 -x_1^2 - \cdots -x_i^2 + x_{i+1}^2 + \cdots + x_{m-2}^2)$$
for some $0 \leq i \leq (m-2)/2$ is called a
\emph{cusp singularity} (or a \emph{cusp}).
See Fig.~\ref{fig82}.
\end{definition}

\begin{definition}
A fold map $f \co M \to S$ of a closed
manifold into a surface is said to be
\emph{image simple}
if $f|_{\Sigma(f)} \co \Sigma(f) \to S$ is a topological embedding. 
Since, for a fold map, the restriction to its singular point
set is an immersion, this is equivalent to requiring that
$f|_{\Sigma(f)}$ is a smooth embedding.
\end{definition}

Let $C_r=\{x\in \R^2 \, : \, ||x||=r \}$ denote the circle of radius $r>0$ centered at the origin. 
We say that a set $C$ of embedded circles in $\R^2$ is a finite collection of $s$ \emph{concentric circles} 
if there is a diffeomorphism of $\R^2$ that takes the collection 
$C$ to the disjoint union of circles $C_1, C_2, \ldots, C_s$. 
A \emph{round fold map} \cite{Ki14a, Ki14b, Ki14c} of a closed manifold $M$ is a 
fold map $f\co M\to \R^2$ whose restriction $f|_{\Sigma(f)}$ to the set of singular points is 
an embedding onto a finite collection of concentric circles in $\R^2$. 
Note that a round fold map is always image simple.

Let $F$ be a compact manifold of dimension $k$, and $\mu \co F \to F$ a diffeomorphism which
is the identity on the boundary $\partial F$. 
Then, we have a fiber bundle 
$$S^1 \tilde\times F = [0, 2\pi] \times F/\!\!\sim \,\, \to S^1,$$
where we define $(2\pi, x) \sim (0, \mu(x))$ 
for each $x \in F$, and where the projection map
is induced by the projection to the first factor $[0, 2\pi] \times F \to [0, 2\pi]$.
Note that the boundary of this bundle is a trivial fiber bundle $S^1 \times \partial F \to S^1$. 
Then, the space $M = S^1 \tilde\times F/\!\sim$
obtained from $S^1 \tilde\times F$ by identifying $S^1 \times \{y\}$ to a point for each $y \in \partial F$
has a natural structure of a smooth closed $(k+1)$-dimensional manifold.
Such a description of $M$ is called an \emph{open book decomposition} with \emph{monodromy} $\mu$.
Furthermore, the submanifold $\{t\} \tilde\times F$ for each $t \in S^1$ is called a \emph{page} and
the submanifold (diffeomorphic to $\partial F$) obtained from $S^1 \times \partial F$
by the identification is called the \emph{binding}.

Suppose that there exists a smooth family $f_t \co F \to [0, \infty)$
of Morse functions, $t \in S^1$, such that 
\begin{itemize}
\item $f_t(\partial F) = 0$, 
\item the critical values of $f_t$ are in $(0, \infty)$, 
\item the critical values are all distinct, and
\item $\tilde{f}|_{\Sigma(\tilde{f})}$ is locally constant, where $\tilde{f} \co S^1 \times F \to [0, \infty)$ 
is defined by $\tilde{f}(t, x)=f_t(x)$.
\end{itemize}
Furthermore, we assume that 
$f_{2 \pi} = f_0 \circ \mu$ and hence that
the family $\{f_t\}$ induces a smooth map $\beta \co S^1 \tilde{\times} F \to S^1 \times [0, \infty)$ 
defined by $\beta(t, x) = (t, f_t(x))$.
Then, the composition of $\beta$ with the natural map $S^1 \times [0, \infty) \to \R^2$
corresponding to the polar coordinates of $\R^2$ defines a 
smooth map $M \to \R^2$.
We see easily that the map thus constructed is a round fold map. 
Furthermore, the indices of its fold points with respect to the inward direction 
coincide with 
those of the corresponding critical points of the Morse functions $f_t$.

\section{Generic homotopies of maps to a surface}\label{s:moves}

Let $C^{\infty}(M, N)$ denote the space of smooth maps of a manifold $M$ into a manifold $N$. 
It is endowed with the Whitney $C^\infty$ topology. 
We say that a map $f\co M\to N$ is \emph{right-left equivalent} to a map 
$g \co M\to N$ if 
$g = \Phi \circ f \circ \Psi^{-1}$ for some diffeomorphisms $\Psi$ of $M$ and $\Phi$ of $N$.
The same definition applies to map germs. 
A map $f\co M\to N$ is said to be \emph{stable} if every map 
$g$ sufficiently close to $f$ 
in $C^\infty$ topology is right-left equivalent to $f$.
A map germ of $f\co M\to N$ is \emph{stable} at a point $p\in M$ if for every neighborhood $U$ 
of $p$ and every map $g \co M \to N$ sufficiently close to $f$ there is a point 
$p'$ in $U$ such that the map germ of $g$ at $p'$ is right-left equivalent to the map germ of $f$ at $p$
(for example, see \cite{AGV}).

Let $f\co M\to N$ be a smooth map. For a point $p$ in $M$, we define $\Sigma_p(f)$ as the set of points in 
$M$ at which the map germ of $f$ is right-left equivalent to the map germ of $f$ at $p$. 
Mather \cite{MaV} proved that if all map germs of $f$ are stable, then the sets 
$\Sigma_p(f)$ are submanifolds of $M$. 
For a finite family of subspaces $\{P_i\}_{i=1}^s$ of a vector space, we say that the spaces are 
in \emph{general position} if 
for every sequence of integers $1 \leq i_1 < i_2 < \cdots < i_r \leq s$, we have
$$\mathrm{codim}\,(P_{i_1} \cap P_{i_2} \cap \cdots \cap P_{i_r}) =
\sum_{k=1}^r \mathrm{codim}\, P_{i_k},$$
where ``$\mathrm{codim}$'' denotes the codimension.
Suppose that all map germs of $f$ are stable and for any finitely many points $p_1, p_2, \ldots, p_s$ 
with $s>1$ and $f(p_1)= f(p_2) = \cdots = f(p_s)=q$, the subspaces 
$d_{p_i}f(T_{p_i}\Sigma_{p_i}(f))$ are in general position
in $T_qN$.
Then we say that $f$ satisfies the \emph{Mather normal crossing condition}. 

In a series of papers, Mather investigated stability of maps and proved its equivalence to various 
other conditions. In particular, Mather proved~\cite{MaV} that a proper map $f$ is stable if and only if 
it satisfies the Mather normal crossing condition (for this particular interpretation of the Mather result, 
and a new proof, see {\cite{Sad25}}).
In the case of maps with fold and cusp singularities 
the Mather normal crossing condition 
simply requires that the images of submanifolds of fold and cusp singular points intersect 
(and self-intersect) generically. In particular, image simple fold maps are always stable.


To study homotopies of stable maps,
we work with generic homotopies.
 For the definition and some properties of
\emph{generic homotopies}, the reader is referred to \cite{KS}.
In particular, the class of generic homotopies is open and dense in the space of all homotopies 
(e.g., see \cite[Lemma 4.1]{KS}), which means that an arbitrary
homotopy between stable maps can be approximated arbitrarily well
by a generic homotopy. 
For this reason, we may and will almost always assume
that a homotopy between stable maps is generic.
Furthermore, under a generic homotopy, the image of the singular point
set is modified by isotopy as well as by finitely many local modifications or moves 
listed in Theorem~\ref{th:moves} below (e.g., see \cite[Theorem~4.2]{KS}). 
For a precise formulation of \emph{moves} and their 
\emph{realizability} (or \emph{always-realizability}), the reader is referred to
\cite[\S3]{Sae25} as well.

\begin{theorem}\label{th:moves}
Under a generic homotopy between stable maps of
a closed manifold of dimension $\geq 2$ to a surface, the image of the singular 
point set is modified by isotopy as well as
\begin{itemize}
\item type II fold crossings and their reverse moves,
\item type III fold crossings, 
\item cusp-fold crossings and their reverse moves,
\item birth-death moves,
\item cusp merges and fold merges, and
\item flip-unflip moves.
\end{itemize}
\end{theorem}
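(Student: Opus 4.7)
The plan is to regard the homotopy $F$ as a path in the mapping space $C^\infty(M,S)$, stratified by Mather equivalence classes of multi-germs, and to classify the codimension-$1$ walls that such a generic path can cross. By the discussion preceding the theorem, the open top stratum consists of stable maps, i.e., maps whose only singularities are folds and cusps and whose multi-germs satisfy the Mather normal crossing condition. By \cite[Lemma~4.1]{KS}, a generic homotopy is by definition transverse to every stratum of this stratification, so it meets only strata of codimension $1$, and only at finitely many parameter values $0<t_1<\cdots<t_N<1$, transversally. Between consecutive $t_i$'s the map $F_t$ is stable and the singular image $F_t(\Sigma(F_t))$ changes only by ambient isotopy in $S$. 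It therefore suffices to identify the codimension-$1$ strata and the modification each one induces on the singular image.

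The codimension-$1$ strata are of two kinds. The mono-germ strata classify unstable map germs $(M,p)\to (S,f(p))$ that admit a one-parameter universal unfolding; for the surface target, Whitney--Thom--Cerf singularity theory produces precisely three families, namely the lips/beaks unfolding (birth-death of a pair of cusps on a fold arc), the swallowtail-type unfolding (flip-unflip), and the unfolding in which two cusps or two fold branches coalesce (cusp merge, fold merge). The multi-germ strata classify the ways the normal crossing condition can fail in codimension one: two fold arcs becoming tangent in $S$ (type II fold crossing), three fold arcs becoming concurrent in $S$ (type III fold crossing), or a cusp image transversely meeting a fold arc (cusp-fold crossing). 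Together these six families exhaust the codimension-$1$ strata and match the six items listed in the statement.

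The third step is then bookkeeping: near each $t_i$, read off the universal unfolding of the relevant (multi-)germ in a small neighborhood of the affected locus of $\Sigma(F_{t_i})$, and verify that the resulting modification of $F_t(\Sigma(F_t))\subset S$ agrees with the advertised move. For each of the families above this calculation is local and has been written out many times; reversibility of type II, cusp-fold, and birth-death moves follows from the symmetry of the universal unfolding under $t\mapsto -t$, while type III, cusp merge, fold merge, and flip-unflip are intrinsically paired (the unflip of a flip is a flip again).

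The main obstacle is step two: proving completeness of the codimension-$1$ classification and proving that a generic path avoids every stratum of codimension at least $2$. These are the substantive inputs, and they rely on the multi-jet transversality theorem combined with Mather's classification for maps into surfaces. Since both assertions are proved in \cite{Cerf, HW} and summarized in the form we need in \cite[Theorem~4.2]{KS} and \cite[\S3]{Sae25}, we would not reprove them but would invoke these references and merely check the matching between each codimension-$1$ type and the corresponding item on the list.
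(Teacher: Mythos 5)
The proposal is correct and takes essentially the same approach as the paper: both defer the substantive classification of codimension-one strata and the resulting local modifications to the cited references \cite{Cerf,HW,KS,Sae25}, the student simply making the underlying stratification framework explicit. A minor slip in your bookkeeping discussion: flip and unflip (and likewise cusp merge and fold merge) are reverse moves of each other rather than self-inverse, and the reversibility of every listed move arises uniformly from crossing the corresponding codimension-one wall in the opposite direction, but this does not affect the argument.
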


To discuss the realizability of the 
moves listed in Theorem~\ref{th:moves}, we need a coorientation
(or normal orientation) convention as follows. 
Suppose that $p$ is a fold singular point of $f$, 
$(t, x_1, x_2, \ldots, x_{m-1})$ are local coordinates around $p$, 
and $(X, Y)$ are local coordinates around $f(p)$ with respect to which $f$ is given by its normal form
as in Definition~\ref{def:fold}.
We choose the coorientation 
of $f(\Sigma(f))$ at $f(p)$ as indicated in Fig.~\ref{fig:coorientation}. 
We emphasize that locally in the standard coordinate chart, the coorientation of the image of the fold curve 
is chosen in the direction of $-Y$. 
 
\begin{figure}[t]
 	\centering
\includegraphics[width=0.4\textwidth]{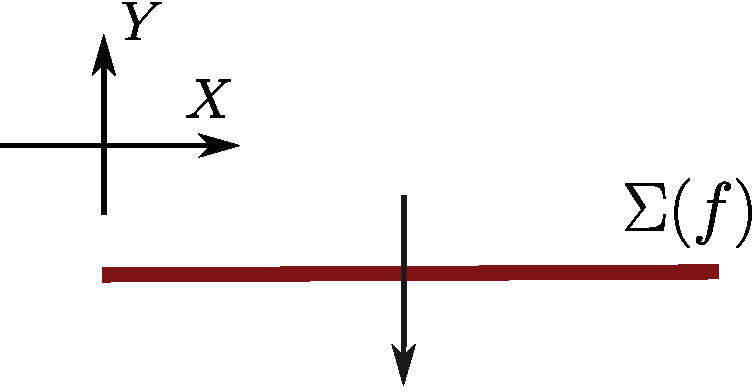}
 	\caption{Coorientation convention at a fold singularity}
 	\label{fig:coorientation}
\end{figure}
 
Now we discuss the six local modifications listed in Theorem~\ref{th:moves}.
 
\subsection*{Type II fold crossing}
The type II fold crossing increases the number of 
self-intersection points of $f(\Sigma(f))$ by two, see Fig.~\ref{fig:II}
from left to right.
Suppose that the two fold curves are of indices $i$ and $j$
as indicated in the figure. 
If $i\le j$, then the fold crossing move is always-realizable by a 
(generic) homotopy. 
The inverse to the fold crossing is realizable if $j<i$. 

\begin{figure}[t]
 	\centering
 	\includegraphics[width=0.6\textwidth]{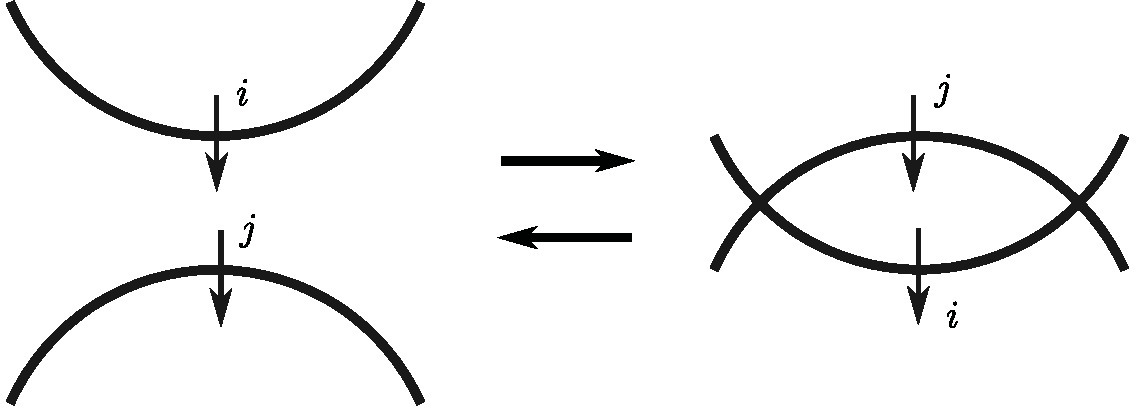}
 	\caption{Type II fold crossing and its reverse move}
 	\label{fig:II}
\end{figure}

\subsection*{Type III fold crossing}
This move occurs at a triple self-intersection point of the images of three arcs of fold singular points. 
Suppose that the three fold curves are of indices $i, j$ and $k$
as in the left of Fig.~\ref{fig:III}. 
The move in the figure from left to right is realizable if 
any of the following three conditions are satisfied (see \cite[Chapter IV, \S2.2, Prop.~2]{Cerf}):
\begin{itemize}
	\item $i+k\ge m$, 
	\item $\max\{i, k\}> j$, or
	\item $i_1=i_2=i_3\ge 2$.
\end{itemize} 
The move from right to left is realizable if 
\begin{itemize}
	\item $i+k\le m-2$,
	\item $\min\{i, k\}< j$, or
	\item $i=j=k\le m-3$. 
\end{itemize}

\begin{figure}[t]
	\centering
	\includegraphics[width=0.6\textwidth]{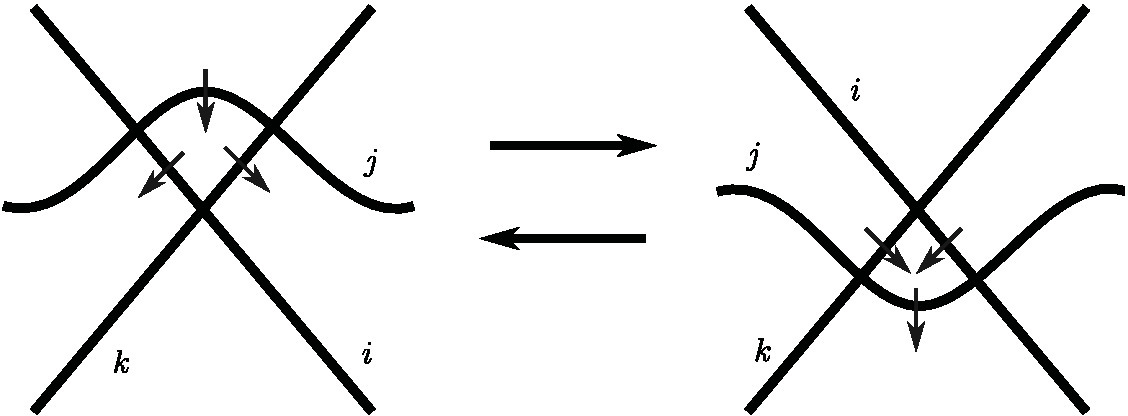}
	\caption{Type III fold crossing}
	\label{fig:III}
\end{figure}

\subsection*{Cusp-fold crossing}
The cusp-fold crossing is shown in  Fig.~\ref{fig:foldcusp},
 from left to right. This
move is realizable if $i > 0$. 
The reverse move is realizable if (see \cite[Chapter IV, \S3.3, Prop. 4]{Cerf})   
\begin{itemize}
	\item $i+j < m-2$, or
	\item $i<j$.
\end{itemize}

\begin{figure}[t]
	\centering
	\includegraphics[width=0.6\textwidth]{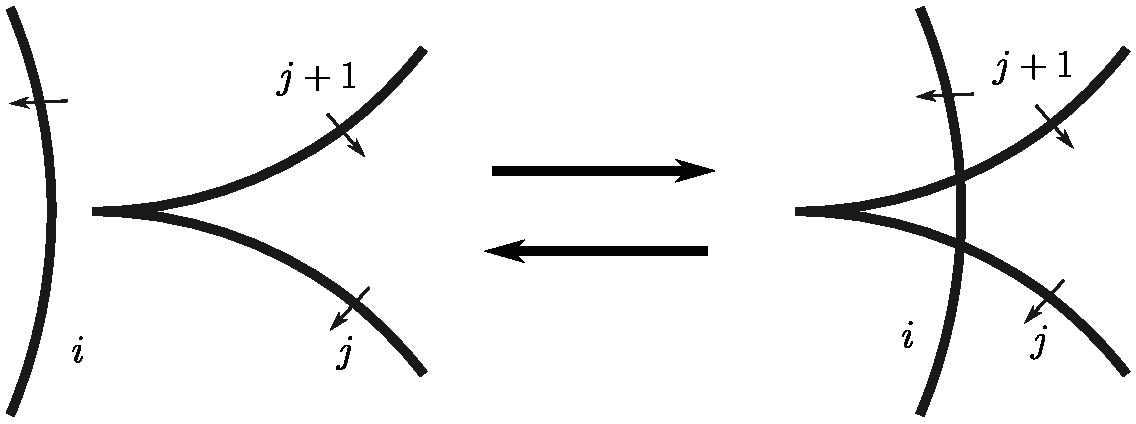}
	\caption{Cusp-fold crossing and its reverse move}
	\label{fig:foldcusp}
\end{figure}

\subsection*{Birth move and death move}
The birth move creates a new component of singular points. 
Such a move is always-realizable,
see Fig.~\ref{fig:wrinkle}.
Its reverse, i.e., the move that 
eliminates a component
%
is always-realizable in the case where one of the fold arcs is of absolute index $0$, 
see \cite{Cerf}. 

\begin{figure}[t]
	\centering
	\includegraphics[width=0.6\textwidth]{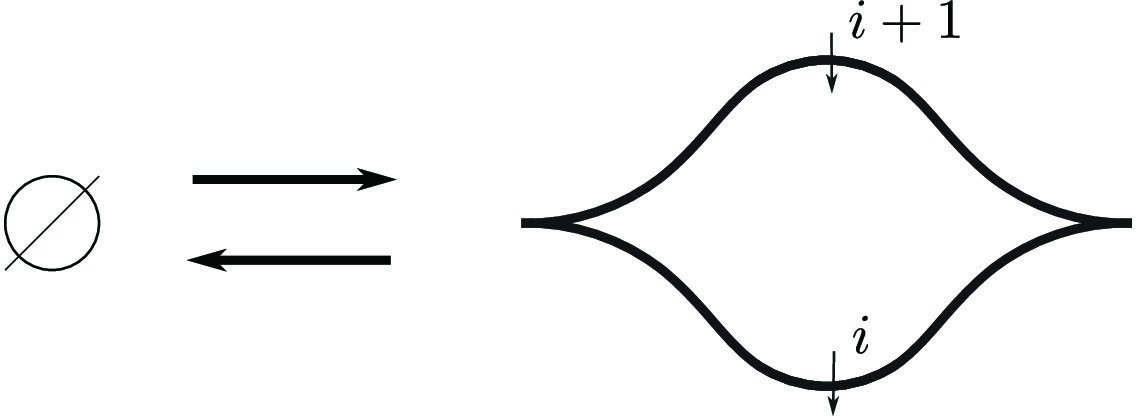}
	\caption{Birth move (from left to right) and death move 
        (from right to left)}
	\label{fig:wrinkle}
\end{figure}

\subsection*{Cusp merge and fold merge}
The cusp merge, depicted in Fig.~\ref{fig:cusp_merge}, from left to right,
is realizable provided that the fiber over the point $\ast$ is path connected.
The fold merge, which is depicted in Fig.~\ref{fig:cusp_merge},
from right to left, is not always-realizable. 

\begin{figure}[t]
	\centering
	\includegraphics[width=0.7\textwidth]{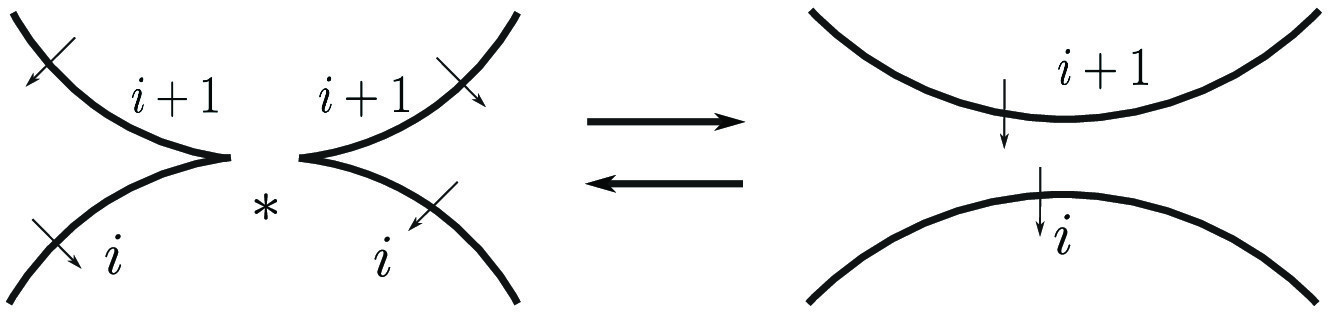}
	\caption{Cusp merge (from left to right) and fold merge (from right to left)}
	\label{fig:cusp_merge}
\end{figure}

\subsection*{Flip and unflip}
The flip move is depicted in Fig.~\ref{fig:swallowtail}, from
left to right. It is
always-realizable for $i<m-1$.
The unflip is depicted in Fig.~\ref{fig:swallowtail}, from
right to left. It
is realizable if $i=0$ or $2\le i\le m-2$, see \cite[Part I, Chapter V, Proposition 1.4]{HW}.  

\begin{figure}[t]
	\centering
	\includegraphics[width=0.5\textwidth]{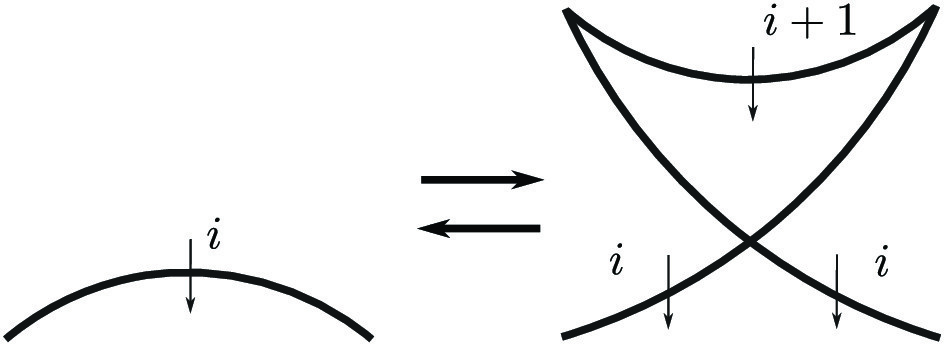}
	\caption{Flip (from left to right) and unflip (from right to left)}
	\label{fig:swallowtail}
\end{figure}

\section{A peculiar open book decomposition of the sphere $S^{2n+1}$}\label{s:obd}

For $n \geq 1$, let $P$ denote the total space of the disc bundle over $S^{n}$ 
associated with the tangent bundle $TS^n$ over $S^n$. 
Specifically, we choose $S^n$ to be the standard unit Riemannian sphere in $\R^{n+1}$, 
and let $P$ be the submanifold of $TS^n$ consisting of points $(x, v)$ with $x \in S^n$ 
and $v \in T_xS^n$ such that the length $||v||$ of the vector $v$ at $x$ is at most $\pi$. 
The projection of this disc bundle is denoted by $\pi_P\co P\to S^n$.

\begin{example}
	In the case $n=1$, the disc bundle $P$ over $S^1$ is diffeomorphic to the cylinder $S^1\times [0, 2\pi]$. We will define a generalized Dehn twist motivated by the standard example. The standard Dehn twist is a diffeomorphism of the cylinder that twists it fixing the boundary pointwise. In coordinates it is given by $(x, t)\mapsto (xe^{it}, t)$. 
\end{example}

We now define a generalized Dehn twist of $P$ relative to the boundary as follows. 
Suppose that $\varphi$ is a self-diffeomorphism of $P$ that is the identity on the boundary.
It takes a fiber $D_x=\pi_P^{-1}(x)$ over a point $x \in S^n$ to a properly embedded disc $\varphi(D_x)$ in $P$. 
Since $\varphi$ preserves $\partial D_x$ point-wise, the projection $\pi_P|_{\varphi(D_x)}$ takes 
the boundary of the disc $\varphi(D_x)$ to the point $x$, and therefore the restriction 
$\pi_P\circ \varphi|_{D_x}$ defines an element in $\pi_{n}(S^n)\cong \Z$. 
We say that the self-diffeomorphism $\varphi$ of $P$ is a \emph{generalized Dehn twist} 
if it is the identity on the boundary, and the map $\pi_P\circ \varphi|_{D_x}$ 
represents a generator of the group $\pi_{n}(S^n)\cong \Z$ for every fiber $D_x$ of $\pi_P$. 
(If we orient $S^n$ and the fibers of $P$ consistently, then we say that
$\varphi$ is a \emph{generalized positive Dehn twist} if
$\pi_P \circ \varphi|_{D_x}$ represents $1 \in \pi_{n}(S^n)\cong \Z$.)

To construct such a diffeomorphism $\varphi$ 
explicitly, let $\gamma \co P \to P$ denote the smooth map that 
takes a tangent vector $v$ at a point $x\in S^n$ to the point $(\ell(1), \ell'(1)) \in P$, where $\ell$ is 
the unique geodesic $\R \to S^n$ such that $\ell(0)=x$, $\ell'(0) = v$, and
$\ell'(t) \in T_{\ell(t)}S^n$ denotes the velocity vector of $\ell$ at $t \in \R$.
We see easily that $\gamma$ is a diffeomorphism, since one can construct its inverse by 
sending $(x, v)$ to $(\ell(-1), \ell'(-1))$.

\begin{lemma}\label{lemma:gamma}
The diffeomorphism $\gamma \co P \to P$ is isotopic to the identity through
diffeomorphisms which may not fix the boundary point-wise.
\end{lemma}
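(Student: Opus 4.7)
The plan is to realize the isotopy by the geodesic flow on $TS^n$, restricted to the sub-bundle $P$. For $t\in\R$, define
$$\gamma_t\co P\to P,\qquad \gamma_t(x,v)=(\ell(t),\ell'(t)),$$
where $\ell\co\R\to S^n$ is the unique geodesic with $\ell(0)=x$ and $\ell'(0)=v$, and $\ell'(t)\in T_{\ell(t)}S^n$ denotes its velocity. By construction $\gamma_0=\id_P$ and $\gamma_1=\gamma$. The smoothness of the geodesic flow on $TS^n$ immediately gives smoothness of the map $(t,x,v)\mapsto \gamma_t(x,v)$.

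Next I would verify the two things needed to promote this flow to an isotopy in $\Diff(P)$. First, $\gamma_t$ sends $P$ into $P$: geodesics on the round sphere have constant speed, so $\|\ell'(t)\|=\|v\|\le\pi$ for all $t$, which means $\gamma_t(x,v)\in P$. Second, each $\gamma_t$ is a diffeomorphism of $P$: the identity $\gamma_t\circ\gamma_{-t}=\gamma_{-t}\circ\gamma_t=\id_P$ (again a consequence of constant-speed geodesics, so that $\gamma_{-t}$ also preserves $P$) exhibits $\gamma_{-t}$ as a smooth inverse. Therefore $\{\gamma_t\}_{t\in[0,1]}$ is a smooth path in $\Diff(P)$ from $\id_P$ to $\gamma$, which is the required isotopy.

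Finally, I would remark on why this isotopy does not fix $\partial P$ pointwise, clarifying that the hypothesis has been relaxed for a reason. On the boundary $\partial P$ one has $\|v\|=\pi$, and a standard computation with $\exp_x(tv)=\cos(t\|v\|\slash\|v\|\cdot\|v\|)\,x+\ldots$ shows that $\gamma_t(x,v)$ traces out a great-circle lift and, in particular, moves off of $(x,v)$ for $t\in(0,1)$. Thus there is essentially no obstacle: the only subtle point is recognizing that the norm-preservation property of geodesic flow is exactly what is needed to keep the flow inside the radius-$\pi$ disc bundle $P$, and the argument is otherwise formal.
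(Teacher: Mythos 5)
Your proof is correct and follows essentially the same route as the paper: both define $\gamma_t(x,v)=(\ell(t),\ell'(t))$ via the geodesic flow on the round sphere and observe that $\gamma_0=\id_P$, $\gamma_1=\gamma$, and each $\gamma_t$ is a diffeomorphism of $P$. Your added remarks — that constant speed of geodesics keeps the flow inside the radius-$\pi$ disc bundle, and that the boundary is genuinely moved — simply make explicit what the paper leaves implicit.
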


\begin{proof}
For $v \in T_xS^n$ with $||v|| \leq \pi$ and $t \in [0, 1]$, let $\gamma_t(x, v)$ be the point 
$(\ell(t), \ell'(t)) \in P$, where $\ell$ is the unique geodesic as described above.
We see easily that this defines a well-defined smooth geodesic flow $\gamma_t \co P \to P$,
which will be often considered as an isotopy of $P$, in the following argument.
By an argument similar to the above, we see that
this defines a smooth $1$-parameter family of diffeomorphisms.
Furthermore, we see that $\gamma_0$ is the identify of $P$ and that $\gamma_1 = \gamma$.
This completes the proof.
\end{proof}

Now, we define the diffeomorphism $\varphi$ by $\varphi =\tau\circ \gamma$, where $\tau$ is the involution 
that takes a tangent vector $v$ at $x\in S^n\subset \R^{n+1}$ to the tangent vector $-v$ at $-x$. 
Here, we are identifying the tangent space $T_x S^n$ with $\{v \in \R^{n+1}\,:\, x \cdot v = 0\}$, where
``$\cdot$'' denotes the usual Euclidean inner product.
It immediately follows that the diffeomorphism $\varphi$ is the identity on the boundary of $P$, 
and that $\pi_P\circ \varphi|_{D_x}$ represents a generator of the group $\pi_{n}(S^n)\cong \Z$ for every 
fiber $D_x$ of $\pi_P$.

We also construct a Morse function $f \co P \to [-1, 2]$ with exactly two
critical points $p$ and $q$ of indices $0$ and $n$, respectively,
such that $f(p) = -1$, $f(q) = 1$ and $f(\partial P) = 2$. 
Namely, let $\xi \co S^n \to [-1, 1]$ be a standard Morse function on $S^n$ with two critical points
$p$ and $q$ of indices $0$ and $n$, respectively, 
and $\eta$ a Morse--Bott function on $P$ that assigns to each vector $v$ the square $||v||^2$ of its length. 
Then $f = \xi \circ \pi_P+(2 - \xi \circ \pi_P) \cdot \eta/\pi^2$ is a desired Morse function on $P$.
Note that the critical points $p$ and $q \in P$ of $f$ are points in the zero section $S^n$ of $P$. 
The closure of the descending disc of the critical point $q$ coincides with the zero section $S^n$ of $P$, 
while the ascending disc of $q$ is the fiber $D_q=\pi_P^{-1}(q)$, provided that we choose the gradient-like
vector field of $f$ appropriately. 

Let $DP = P_R \cup P_L$ denote the (untwisted) double of the manifold $P$, 
formed from the disjoint union of $P_R$ and $P_L$ by identifying their boundaries by the identity map.
Then, $\varphi^{-1}(D_q)$ in $P_R$ together with $D_q$ in $P_L$ form a sphere $S_1$. 
A copy of $D_q$ in $P_R$ and a copy of $D_q$ in $P_L$ also form a sphere $S_2$.  

\begin{lemma}\label{l:int} 
The sphere $S_1$ is isotopic, in $DP$, to a sphere that intersects $S_2$ transversely at a unique point.  
\end{lemma}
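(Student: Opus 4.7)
The plan is to slide the fiber $S_2$ of the sphere bundle $DP \to S^n$ to a different fiber by a smooth ambient isotopy of $DP$, verify that $S_1$ meets this translated fiber transversely at a single point, and then apply the inverse isotopy to $S_1$ to obtain the desired isotopic sphere. For the first step I would choose $q' \in S^n$ distinct from $q$ and a smooth path $\sigma_s$ in $SO(n+1)$ with $\sigma_0 = \id$ and $\sigma_1(q) = q'$, which exists because $SO(n+1)$ is path-connected and acts transitively on $S^n$. The formula $\tilde{\sigma}_s(x, v) = (\sigma_s(x), \sigma_s(v))$ defines a self-diffeomorphism of $TS^n$ preserving the Euclidean norm, and therefore preserves $P$ and $\partial P$; the induced map on $DP$ is a bundle automorphism over $\sigma_s$, and is smooth across the gluing since, in a bicollar coordinate $(r, y)$ of $\partial P$, it has the form $(r, y) \mapsto (r, \tilde{\sigma}_s(y))$. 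Under $\tilde{\sigma}_1$ the fiber $S_2$ is sent to $S_2' := D_{q'}^R \cup D_{q'}^L$.

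Next I would compute $S_1 \cap S_2'$ directly. The $P_L$ parts $D_q^L$ and $D_{q'}^L$ are disjoint since $q' \neq q$. In $P_R$, a point $(q', v) \in D_{q'}$ lies in $\varphi^{-1}(D_q)$ exactly when $\varphi(q', v) = \tau(\gamma(q', v)) \in D_q$, equivalently when the geodesic $\ell$ from $q'$ with initial velocity $v$ satisfies $\ell(1) = -q$. Since $q' \neq q$, the distance $d(q', -q)$ is strictly less than $\pi$, so there is a unique such $v$, namely the initial velocity of the unique minimizing geodesic from $q'$ to $-q$; hence $S_1 \cap S_2'$ is the single point $(q', v) \in P_R$.

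The main technical step, which I expect to be the most delicate, is transversality at $(q', v)$. Writing $d\varphi = d\tau \circ d\gamma$ and using that $d\tau$ acts as $-\id$ on both the horizontal and vertical components of $TP$ (hence preserves the splitting), transversality of $T_{(q', v)} D_{q'}$ (the vertical subspace) and $T_{(q', v)} \varphi^{-1}(D_q)$ reduces to showing that the restriction of $d\gamma$ to the vertical subspace projects injectively onto the horizontal subspace at $\gamma(q', v)$. By the standard identification of geodesic variations with Jacobi fields, this projection sends $w \in T_{q'} S^n$ to $J(1)$, where $J$ is the Jacobi field along $\ell$ with $J(0) = 0$ and $J'(0) = w$. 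Since $\|v\| = d(q', -q) < \pi$, the geodesic $\ell$ admits no conjugate points in $(0, 1]$ on $S^n$, so $w \mapsto J(1)$ is injective and the two tangent spaces meet only at zero. The inverse ambient isotopy $\tilde{\sigma}_s^{-1}$ then carries $S_1$ to the isotopic sphere $\tilde{\sigma}_1^{-1}(S_1)$, which meets $S_2$ transversely at the unique point $\tilde{\sigma}_1^{-1}((q', v))$, as required.
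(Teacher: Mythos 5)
Your proposal is correct, but it takes a genuinely different route than the paper's argument, so let me compare the two. The paper isotopes $S_1$ rather than $S_2$: it slides $D_q$ through the fibers to $D_p$ (the fiber over the antipode of $q$), replacing $S_1$ with $S_1' = \varphi^{-1}(D_p)^R \cup D_p^L$. Using $\tau^{-1}(D_p) = D_q$, the $P_R$-intersection $\varphi^{-1}(D_p) \cap D_q$ becomes $\gamma^{-1}(D_q)\cap D_q$, i.e.\ $D_q \cap \gamma(D_q)$ at the point $(q,0)$ of the zero section, where transversality is immediate from the very definition of $\gamma$ (the geodesic flow at the zero vector has horizontal derivative the identity, so $d\gamma$ carries the vertical off itself). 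You instead isotope $S_2$ to a nearby fiber via an $SO(n+1)$-rotation, and the intersection then sits at a genuinely nonzero vector $(q',v)$ with $0 < \|v\| < \pi$, so verifying transversality requires the Jacobi field / conjugate point argument you give. Both approaches are correct; the paper's choice buys a more elementary transversality step (no conjugate-point theory needed), while yours is perhaps more self-contained in that the isotopy is written down explicitly via a path in $SO(n+1)$ rather than invoking ``fibers are isotopic through fibers.'' Two small points worth cleaning up in your version: (i) you should insist $q'\notin\{q,-q\}$ (otherwise the description of $v$ as ``the initial velocity of the unique minimizing geodesic'' degenerates, even though the conclusion still holds at $v=0$), and (ii) the reduction ``$d\tau$ preserves the horizontal/vertical splitting'' deserves the one-line justification that $\tau$ is the differential of an isometry of $S^n$ and hence respects the Levi--Civita connection.
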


\begin{proof}  
The sphere $S_1$ is isotopic to the sphere $S_1'$ which is the union
of a copy of $\varphi^{-1}(D_p)$ in $P_R$ and a copy of $D_p$ in $P_L$,
since $D_p$ and $D_q$ are isotopic through fibers.
Then, we see that $S_1' \cap S_2$ consists of $\varphi^{-1}(D_p) \cap D_q$ in $P_R$,
since $D_p$ and $D_q$ are disjoint in $P_L$. Since $\varphi = \tau \circ \gamma$
and $\tau^{-1}(D_p) = D_q$, we have only to show that $D_q$ and $\gamma(D_q)$
intersect transversely at $(q, 0)$, the point of the zero
section on the fiber $D_q$. This is obvious by the very definition of $\gamma$.
This completes the proof.
\end{proof}

Let $X$ denote the closed manifold of dimension $2n+1$ with an open book decomposition given by 
the page $P$ and monodromy $\varphi$. 
We will give the description of $X$ as in \cite{Hs}. 
The manifold $X$ can be constructed from $[0, 2\pi] \times P$ by performing a ``vertical'' identification 
$V$ and a ``horizontal'' identification $H$, where
\begin{itemize}
\item $V$ identifies $(2\pi, x)$ with $(0, \varphi(x))$ for every $x\in P$, and
\item $H$ identifies $(t, y)$ with $(s, y)$ for every $y \in \partial P$ and all $t,s\in [0, 2\pi]$. 
\end{itemize}
Let $X_+$ and $X_-$ denote the subspaces of $X$ consisting of the equivalence classes of points in 
$[0, \pi] \times P$ and $[\pi, 2\pi] \times P$, respectively. 
Then $X$ is the union of $X_+$ and $X_-$. 
The interiors of $X_+$ and $X_-$ are disjoint, while the boundaries $\partial X_+$ and $\partial X_-$ are 
diffeomorphic to the double $DP$ of the manifold $P$. 
We write $\partial X_\pm=P^\pm_R \cup P^\pm_L$, where $P^+_R\subset X_+$ stands for $\{0\} \times P$, 
$P^+_L$ and $P^-_L$ stand for $\{\pi\} \times P$, and $P^-_R$ stands for $\{2\pi\} \times P$. 
Thus, the manifold $X$ is obtained from the disjoint union of $X_+$ and $X_-$ by identifying  $P^+_L$ with $P^-_L$ 
by means of the identity map, and by identifying $P^-_R$ with $P^+_R$ by means of $\varphi \co P_R^- \to P_R^+$
(see Fig.~\ref{fig3}).

\begin{figure}[t]
\centering
\includegraphics[width=\linewidth,height=0.25\textheight,
keepaspectratio]{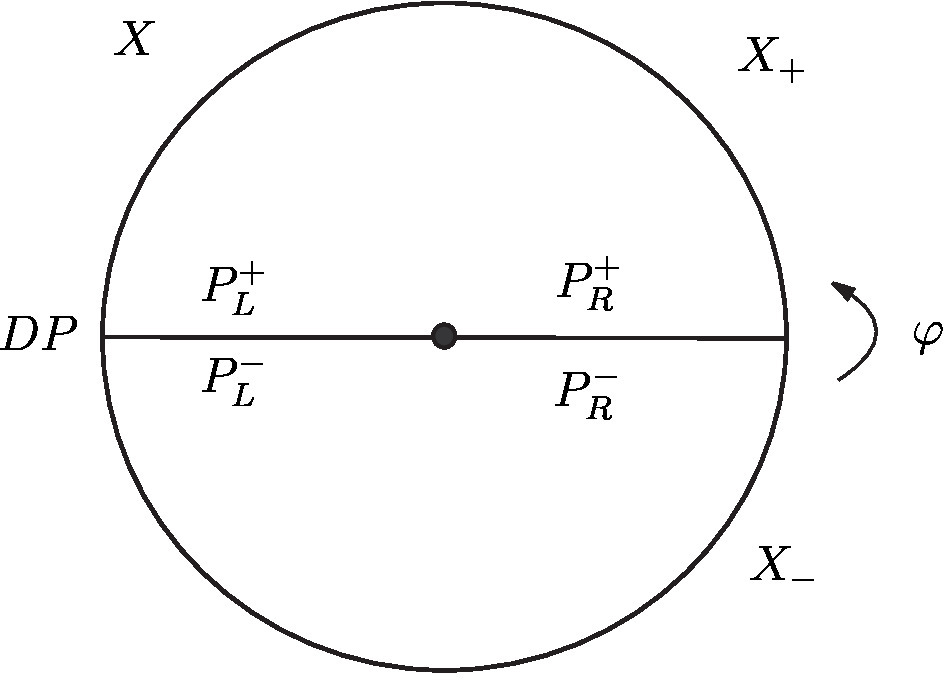}
\caption{Open book decomposition of $X$}
\label{fig3}
\end{figure}

The page $P$ has a handle decomposition that consists of a handle of index $0$, and a handle $h^n$ of index $n$. 
Therefore, the manifold $X$ admits a handle decomposition with a handle of index $0$, a handle of index $n$, 
a handle of index $n+1$ and a handle of index $2n+1$. 
Indeed, the manifold $X_-$ admits a handle decomposition with a handle of index $0$ and 
a handle of index $n$, while $X_+$ admits a handle decomposition with a handle of index $n+1$ 
and a handle of index $2n+1$. Thus, the manifold $X$ admits a handle decomposition as described. 

The attaching sphere of the $(n+1)$-handle in $X$ is the union of the cocore disc $coc(h^n)_L$ of 
the $n$-handle $h^n$ in $P_L^- \cong P$ and the copy of its image $\varphi^{-1}(coc(h^n)_R)$ in $P_R^- \cong P$. 
By Lemma~\ref{l:int}, the attaching sphere $coc(h^n)_L \cup \varphi^{-1}(coc(h^n)_R)$ of 
the $(n+1)$-handle in $X$ is isotopic to a sphere that intersects the belt sphere 
$coc(h^n)_R \cup coc(h^n)_L$ of the $n$-handle in $X$ at a unique point transversely, and therefore 
the $n$-handle and the $(n+1)$-handle in $X$ are canceling. 
We conclude that the manifold $X$ is obtained by gluing two smooth $(2n+1)$-dimensional discs. 
Therefore, $X$ is a smooth manifold $\Sigma^{2n+1}$
homotopy equivalent to a sphere. 
In particular, for $n=1, 2$, $X$ is diffeomorphic to the standard sphere $S^{2n+1}$.

We claim that the monodromy $\varphi$ can be modified, if necessary, in a neighborhood $U$ of a point, 
say $x_0 \in \mathrm{Int}\,P = \mathrm{Int}\,P_R^-$, 
so that the resulting manifold $X$ is diffeomorphic to the standard $(2n+1)$-sphere, 
provided that $n \geq 3$.
Furthermore, we may choose $U$ such that its closure
is contained in $P_R^- \setminus Q$, where $Q$ is the union of the zero section $S^n$ of $P_R^-$
and $\varphi^{-1}(D_q)$.
Indeed, assume that $X$ is an exotic sphere $\Sigma^{2n+1}$. 
It is obtained by gluing two discs $D^{2n+1}_-$ and $D^{2n+1}_+$ by means of a diffeomorphism $\phi\co S^{2n} \to S^{2n}$. 
By modifying $\phi$ by isotopy if necessary, we may assume that $\phi$ is the identity map except on 
a small disc neighborhood $B^{2n}$ of a point.
Let $\phi_B \co B^{2n} \to B^{2n}$ be the restriction of $\phi$ to $B^{2n}$.
Let us embed $B^{2n}$ to a small neighborhood $U$ of a point $x_0$ in the interior of $P_R^-$
such that the closure of $U$ is contained in $P_R^- \setminus Q$.
Then, by modifying $\varphi \co P_R^- \to P_R^+$ on $B^{2n}$ using $\phi_B^{-1}$,
we get a manifold diffeomorphic to $X \sharp (-\Sigma^{2n+1}) \cong S^{2n+1}$, where $-\Sigma^{2n+1}$
is the manifold $\Sigma^{2n+1}$ with the orientation reversed (see \cite{Browder}).

Recall that $P$ admits a Morse function $f$ with a unique critical point $p$ of index $0$ and 
a unique critical point $q$ of index $n$. 
Now, we construct a smooth family of functions on $P$:
\[
f_t\co P\longrightarrow \R
\]
parametrized by $t\in [0, 2\pi]$ such that $f_0=f$, $f_{2\pi}=f \circ \varphi$, $f_t$ is a Morse function 
with a unique critical point of index $0$ with value $-1$, a unique critical point of index $n$ with value $1$, 
and $f_t(\partial P) = 2$ for all $t\in [0, 2\pi]$. 
To this end we will construct an ambient isotopy $g_t$ of $P$ parametrized by $t\in [0, 2\pi]$ 
such that $f_t=f\circ g_t$ is a desired family of Morse functions. 

To construct an ambient isotopy $g_t$, we first consider an isotopy parametrized by $t \in [0, \pi]$ of 
the ascending disc $\varphi^{-1}(D_q) = \gamma^{-1}(\tau^{-1}(D_q))$ of $f\circ \varphi$ to the ascending disc 
$D_q$ of $f$. Such an isotopy can be constructed by using 
the ambient isotopy $\gamma_t$, $t \in [0, 1]$, followed by an isotopy 
taking $\tau^{-1}(D_q)$ to $D_q$, where $\gamma_t$, $t \in [0, 1]$, is the ambient
isotopy of $P$ constructed in the proof of Lemma~\ref{lemma:gamma}.
Furthermore, we may choose an ambient
isotopy $\tau_t$ of $P$ that takes $\tau^{-1}(D_q)$ to $D_q$ so that
it consists of diffeomorphisms of $P$ induced by rotations
of $S^n$ about any axis perpendicular to the line $\overline{pq}$
passing through $p$ and $q$ in $\R^{n+1}$.
Here, note that when $n$ is even, the isotopy thus constructed reverses the
orientations of $\varphi^{-1}(D_q)$ and $D_q$.

Finally, we can modify the isotopies $\gamma_t$ and $\tau_t$ slightly 
so that their concatenation defines a smooth ambient isotopy $g_t$ of $P$ 
parametrized by $t \in [0, \pi]$.

Note that the Morse function $f \circ \varphi \co P \to [-1, 2]$ has critical
points $q$ and $p$ of indices $0$ and $n$, respectively. 
Then, 
we have $f\circ g_{\pi}=f\circ \varphi$ over the ascending disc 
$\varphi^{-1}(D_q)$ of $p$ as well as the closure of the descending disc $S^n$ of $p$. 
Indeed, since $\gamma_t$ and $\tau_t$ preserve
the lengths of the vectors in $P$, we conclude that $f \circ g_{\pi} =
f \circ \varphi$ over $\varphi^{-1}(D_q)$. On the other hand,
$\gamma$ is the identity on $S^n$, while $\tau$ takes every level
set of $f|_{S^n}$ to a level set. Therefore, we have $f \circ g_{\pi}
= f \circ \varphi$ on $S^n$.
Note that we have $g_\pi(Q)=\varphi(Q)$ even though $g_\pi$ may not be isotopic to $\varphi$ on $Q$.

We observe that $P\setminus Q$ is diffeomorphic to $(\partial P \setminus
\varphi^{-1}(\partial D_q)) \times (-1, 2]$ under the diffeomorphism that takes 
a point $x \in P \setminus Q$ to the pair $(y, f\circ g_\pi(x))$, 
where $y$ is the unique point on $\partial P \setminus \varphi^{-1}(\partial D_q)$ that lies 
on the gradient curve of $f\circ g_\pi$ passing through $x$. 
There is an isotopy $g_t$ of $P\setminus Q$ parametrized by $t\in [\pi, 2\pi]$ that takes 
a point $x$ with coordinates $(y, f\circ g_\pi(x))$ to the point with coordinates $(y, f\circ \varphi(x))$. 
Such an isotopy extends over $Q$, since $f\circ g_\pi=f\circ \varphi$ over $Q$, 
and we can also modify the two isotopies $g_t$ for $t\in [0, \pi]$ and $t\in [\pi, 2\pi]$ to 
produce a smooth isotopy of $P$ parametrized by $t\in [0, 2\pi]$.

By replacing $f_t$ by $f_t-2$, we arrange the Morse functions $f_t$ so that $f_t(\partial P)=0$, $t \in [0, 2\pi]$. 
Define the map 
\[
   [0, 2\pi] \times P \longrightarrow \R^2
\]
by $\rho(t, x)=-f_t(x)$ and $\theta(t, x)=t$, where $(\theta, \rho)$ are the polar coordinates on $\R^2$. 
The map thus constructed defines a well-defined smooth map $f_X \co X \to \R^2$.
This is an image simple round fold map with a unique fold circle of index $2n$, and 
a unique fold circle of index $n$ with respect to the inward direction.

Now, let $f \co M \to S$ be an arbitrary image simple fold map into a (possibly
non-orientable) surface $S$.
We first consider the disjoint union $f \cup f_X \co M \cup X \to S$,
where $f_X(X)$ is contained in a small $2$-disc in $f(M) \setminus f(\Sigma(f))$.
We apply the birth move (see 
Fig.~\ref{fig:wrinkle}) to create a wrinkle consisting
of two cusps together with fold arcs of absolute indices $0$ and $1$
in such a way that the image sits just next to $f_X(\Sigma(f_X))$. (For
this and the following, see Fig.~\ref{fig4}. The integers attached to some curves 
indicate the corresponding
indices of the relevant folds with respect to the normal directions
indicated by arrows, see Section~\ref{s:moves}.
Then, we construct an image simple fold map $M \# X \cong M \to S$ 
by the connected sum operation along definite folds, as described in \cite[Lemma~5.4]{Sae93},
in such a way that the image of the singular point set is as depicted in the upper right
figure of Fig.~\ref{fig4}. Note that the fold index $n$ does not change
even if we change the normal direction, since the dimension
of the source manifold is equal to $2n+1$.
We apply the reverse move of type II crossing twice so that the image of the fold circle
of index $n$ gets out of the wrinkle. This is possible
as long as $n < 2n-1$, i.e.\ $n \geq 2$ (see 
Fig.~\ref{fig:II}).
Finally, we can eliminate the wrinkle, which is possible,
since one of the fold arcs has absolute index $0$ (see Fig.~\ref{fig:wrinkle}). 

The resulting fold map $g \co M \to S$ is image simple
and we have $\#|\Sigma(g)| = \#|\Sigma(f)|+1$.

This completes the proof of Theorem~\ref{th:main} for $n \geq 2$.

\begin{figure}[t]
\centering
\includegraphics[width=0.95\linewidth,height=0.5\textheight,
keepaspectratio]{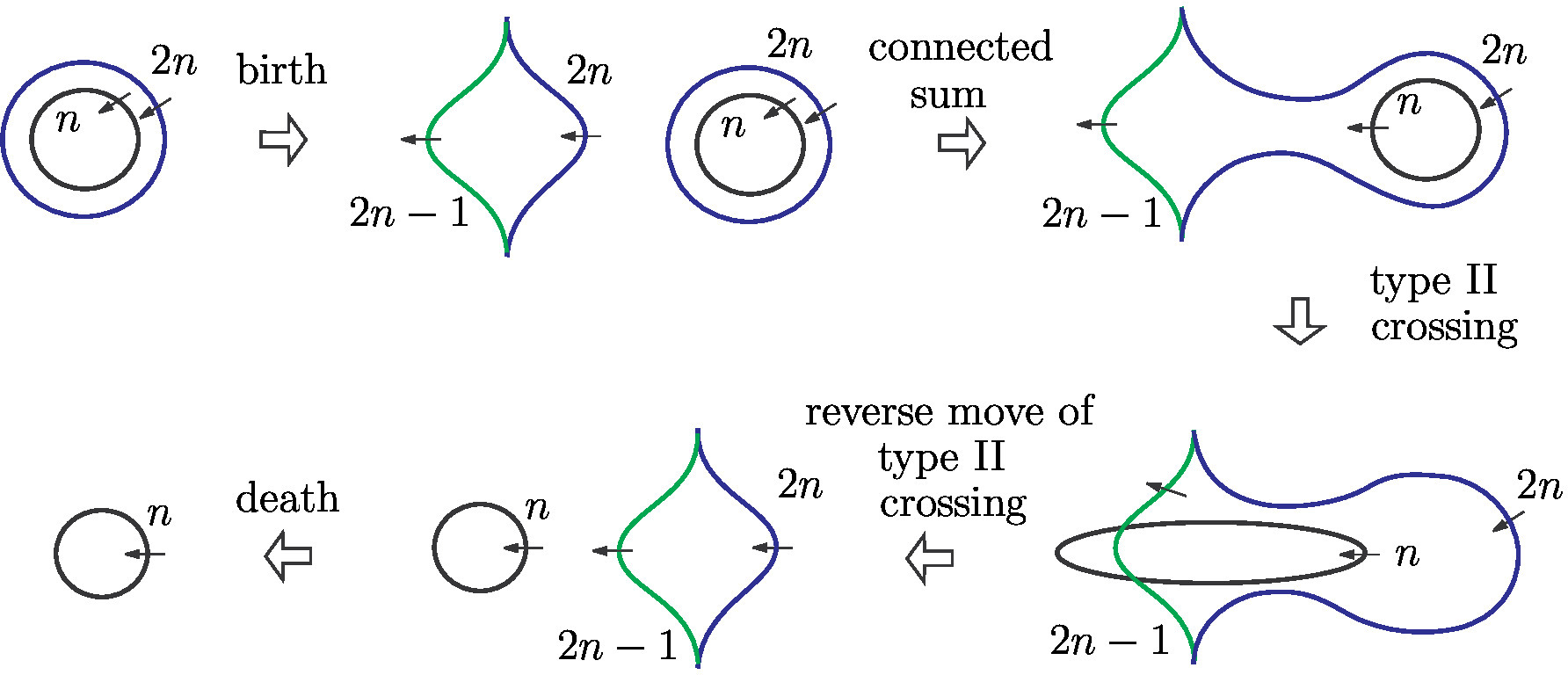}
\caption{Sequence of the singular point set images}
\label{fig4}
\end{figure}

\begin{remark}
(1) We do not know if a similar argument works for $n=1$.
We will give another argument to cover this case in Section~\ref{s:2a}.
On the other hand, if we merge the two cusp points
in the upper right figure of Fig.~\ref{fig4} using
a path whose image is embedded in the left hand side
of the wrinkle, we get an image simple fold map $h \co M \to S$
homotopic to $f$ such that $\#|\Sigma(h)| =
\#|\Sigma(f)| + 3$. This proves Theorem~\ref{th:main}
for the case of $n=1$.

(2) We see easily that $f$ and $g$ are homotopic as continuous maps.
However, we do not know how to construct an explicit homotopy between $f$ and $g$
thus constructed.
\end{remark}

\section{Allowable moves approach}\label{s:2a}

Let us give another proof to Theorem~\ref{th:main} by an approach
similar to that in \cite{Sae19a}. For the moves that we
use in this section, the reader is referred to Section~\ref{s:moves}. 

Suppose $n \geq 2$. Let $f \co M \to S$ be an arbitrary
image simple fold map of a closed $(2n+1)$-dimensional
manifold into a (possibly non-orientable) surface $S$.
We consider a small $2$-disc $\Delta$ contained in $f(M) \setminus f(\Sigma(f))$,
and we modify the map $f$ on a connected component of $f^{-1}(\Delta)$,
following the procedure as described in Fig.~\ref{fig1}.
These figures depict the singular value sets of generic maps inside $\Delta$.
The integers attached to some curves indicate the corresponding
indices of the relevant folds with respect to the normal directions
indicated by arrows.
The lower left figure depicts the singular value set inside the $2$-disc $\Delta$,
i.e.\ the empty set.
Then, we create three wrinkles, using the birth move three times (see Fig.~\ref{fig:wrinkle}). 
Then, by three flip moves, we turn the wrinkles into three bow ties with involved folds of 
indices $n-1$, $n$ and $n+1$ (see Fig.~\ref{fig:swallowtail}).
Note that the index $n-1$ turns into $n+1$ if we reverse the normal direction.
Then, by three cusp merges we merge the three bow ties as in the
upper right figure (see Fig.~\ref{fig:cusp_merge}). 
This is possible, since 
we are modifying the maps only on a connected component
of $f^{-1}(\Delta)$ so that
the relevant fibers over the points on the paths between
the images of the cusps that are merged are connected.
As the three fold curves all have indices $n \geq 2$,
we can perform the type III crossing as in the
lower right figure (see Fig.~\ref{fig:III}).
Finally, we need to perform three unflips. This is possible, 
since the involved folds are of indices $n$ and $n+1$ with
$2 \le n \leq 2n-1$ (see Fig.~\ref{fig:swallowtail}).

\begin{figure}[t]
\centering
\includegraphics[width=\linewidth,height=0.5\textheight,
keepaspectratio]{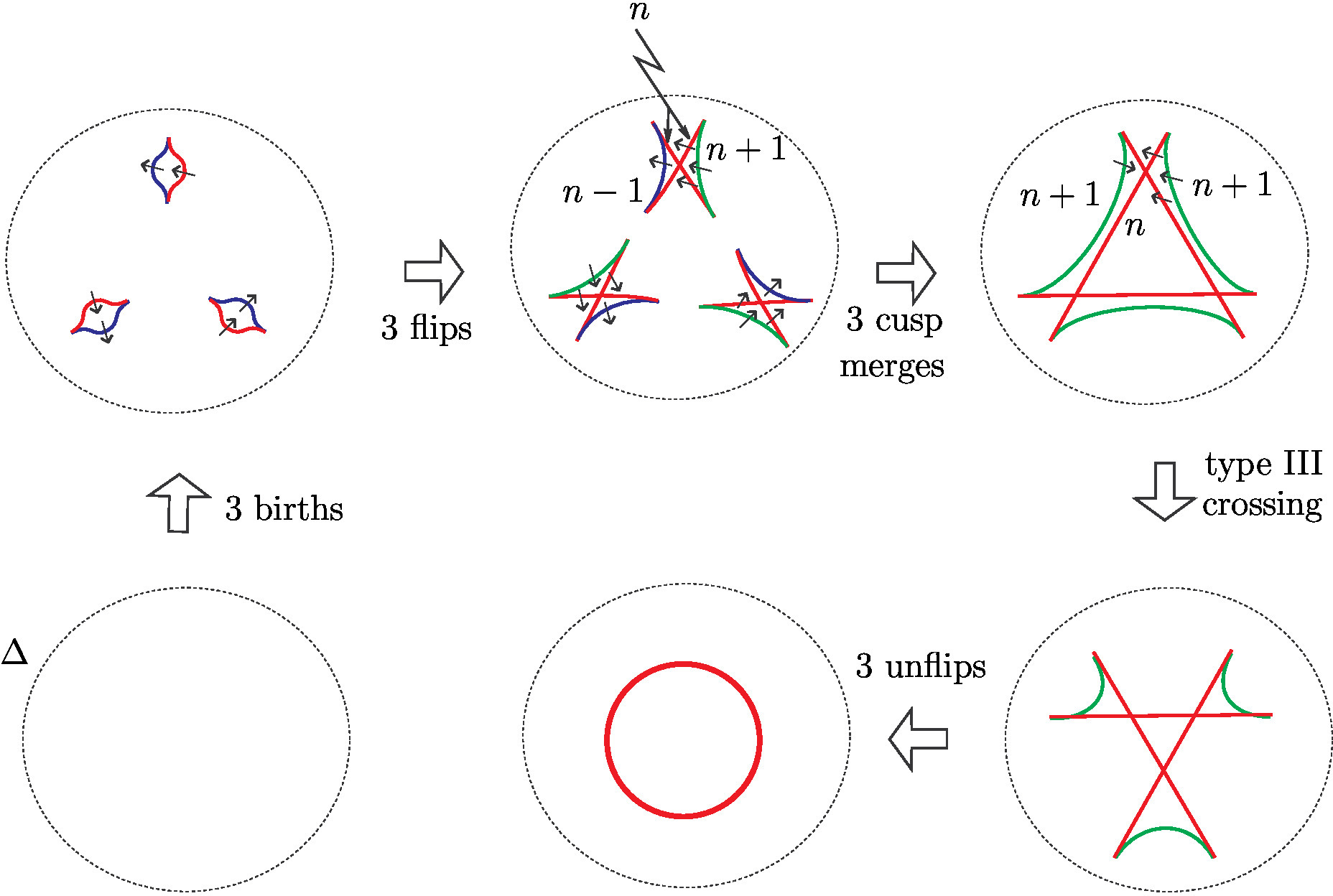}
\caption{Sequence of moves}
\label{fig1}
\end{figure}

The above argument may not work directly for $n=1$. However,
as noted in \cite{Sae19a}, by merging the three pairs of cusps with
appropriate ``twists'', we can guarantee that the
type III crossing can be applied.

This completes the proof of Theorem~\ref{th:main}.

\medskip

This approach allows us to determine the singular point set of the constructed homotopy
as follows.
As we perform three births, we have three $2$-discs as the singular point set
in the first three steps. In the fourth step, we merge the $2$-discs
by three bands in such a way that the resulting compact surface
has exactly one boundary circle whose image
is depicted in the upper right figure of Fig.~\ref{fig1}.
Therefore, the singular point
set for the first four steps must be a M\"{o}bius band.
The rest of the homotopy does not change the topological type
of the singular point set component. Therefore, the resulting
singular point set of the (generic) homotopy is a M\"{o}bius band, 
except for the cylinder part corresponding to the fold
circles of the original image simple fold map $f$.

So, the singular point set of the entire (generic) homotopy $F \co M \times [0, 1] \to
S \times [0, 1]$
is diffeomorphic to the disjoint union of $\#|\Sigma(f)|$
copies of the cylinder together with a M\"{o}bius band.
Note also that $F|_{\Sigma(F)}$ 
has a triple self-intersection point.

This is consistent with the results obtained in \cite{KS}
when the target surface $S$ is orientable 
and $M$ is a closed orientable
manifold of dimension $3$, or $S = \R^2$ or $S^2$
and $M$ is a closed orientable manifold of odd dimension $m$ with
$m > 2$.

Note that in Theorem~\ref{th:main}, the target surface $S$
is possibly non-orientable so that we do not
know if for every homotopy $F$ as in Theorem~\ref{th:main},
$F|_{\Sigma(F)}$ has a triple self-intersection point.

\begin{remark}
Let us give a short remark concerning the first Takase problem formulated in \cite[Problem 3.2]{Sae19}
mentioned in Section~\ref{s:intro}.
The map $f \co S^3 \to \R^2$
constructed in \cite{Sae19a} does not give a direct negative answer to the problem.
However, if we use the procedure as described above, we get a counter
example for every element $n$ of $\pi_3(S^2) \cong \Z$,
since the newly created fold circle is indefinite.
Therefore, the number modulo two of fold components of such image
simple fold maps without definite fold points is not a homotopy invariant in general.
\end{remark}

\section{Even dimensional case}\label{s:even}

The following has been obtained
in \cite[Theorem~1.1]{KS} by using cumulative winding number in
$\frac{1}{2}\Z$. Here, we give another proof,
which seems to be more elementary in nature.

\begin{theorem}\label{thm:even}
Let $M$ be an even-dimensional closed manifold of dimension $m \geq 2$
and $S$ a connected orientable surface. Then, for image simple fold
maps $f \co M \to S$, the parity of the number of connected
components $\#|\Sigma(f)|$ of the singular point set is a homotopy
invariant of $f$.
\end{theorem}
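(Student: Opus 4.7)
The plan is to derive an explicit closed formula expressing $\#|\Sigma(f)|\pmod 2$ in terms of $\chi(M)$, $\chi(S)$, and a homotopy-invariant mod $2$ parity $c\in\{0,1\}$ of the regular fiber of $f$, from which the invariance follows immediately.

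First, I establish the key local fact that the Euler characteristic of the regular fiber jumps by exactly $\pm 2$ across each fold of $f$. In the local normal form of an index-$i$ fold, the regular fibers on the two sides are modeled on the spheres $S^{i-1}$ and $S^{m-2-i}$; since $m$ is even, the integers $i-1$ and $m-2-i$ have opposite parities, so exactly one of these spheres has Euler characteristic $2$ and the other $0$, giving $\chi(S^{m-2-i})-\chi(S^{i-1})=\pm 2$. Consequently, the parity $c\in\{0,1\}$ of $\chi(f^{-1}(y))$ is constant on $S\setminus f(\Sigma(f))$, and the integer-valued function $\tilde h(R):=\tfrac12(\chi(F_R)-c)$ on the regions $R$ of $S\setminus f(\Sigma(f))$ flips parity across every fold curve. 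The closure $A_f$ of the union of regions where $\tilde h$ is odd is then a compact subsurface of $S$ with $\partial A_f=f(\Sigma(f))$; since $S$ is orientable, so is $A_f$, and from $\chi=2-2g-b$ we obtain $\chi(A_f)\equiv\#|\Sigma(f)|\pmod 2$.

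Next, the standard Euler characteristic formula for fold maps $\chi(M)=\sum_R\chi(R)\chi(F_R)$, combined with $\sum_R\chi(R)=\chi(S)$, yields $\chi(M)=c\,\chi(S)+2\sum_R\chi(R)\tilde h(R)$. Reducing modulo $2$ and using $\chi(A_f)=\sum_{R:\,\tilde h(R)\text{ odd}}\chi(R)\equiv\sum_R\chi(R)\tilde h(R)\pmod 2$, we arrive at
\[
\#|\Sigma(f)|\equiv\frac{\chi(M)-c\,\chi(S)}{2}\pmod 2.
\]
To conclude, I verify that $c$ is a homotopy invariant: for a generic homotopy $F\co M\times[0,1]\to S\times[0,1]$ and a generic $y_0\in S$, the fiber $F^{-1}(y_0,t)$ changes by the same $\pm 2$ Euler characteristic jump at each transverse crossing of $\{y_0\}\times[0,1]$ with $F(\Sigma(F))$, since the analogous normal-form analysis applies verbatim to every index-$j$ fold of $F$ with $0\le j\le m-2$ (and $m-2$ is even, so one of $j-1$, $m-2-j$ is an even sphere exponent and the other is odd). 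Hence $c(f)=c(g)$, and the displayed formula forces $\#|\Sigma(f)|\equiv\#|\Sigma(g)|\pmod 2$.

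The main obstacle will be rigorously justifying the Euler characteristic formula $\chi(M)=\sum_R\chi(R)\chi(F_R)$, i.e., showing that the singular preimage $f^{-1}(f(\Sigma(f)))$ contributes trivially. For an image simple fold map this preimage fibers over the $1$-dimensional $f(\Sigma(f))$ with quadric-cone singular fibers, so its Euler characteristic equals $\chi(f(\Sigma(f)))\cdot\chi(\text{cone})=0\cdot 1=0$, as required.
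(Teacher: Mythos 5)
Your argument is correct and is essentially the same as the paper's: both rest on the observation that, because $m$ is even, the Euler characteristic of the regular fiber jumps by exactly $\pm 2$ across each embedded fold curve, so the complement $S\setminus f(\Sigma(f))$ splits into two families of regions with the fold image as the full common boundary; orientability of $S$ then gives $\chi \equiv$ (number of boundary circles) $\pmod 2$, and the fold Euler-characteristic formula $\chi(M)=\sum_R\chi(R)\chi(F_R)$ together with homotopy invariance of $\chi(M)$, $\chi(S)$, and the fiber parity finishes the argument. The paper phrases the region split via a residue condition mod $4$ (using $r_f\in\Z/4\Z$) rather than your integer-valued $\tilde h$, and handles non-closed $S$ by capping off a compact sub-surface rather than noting that non-surjectivity forces $c=0$; these are cosmetic differences in bookkeeping, not a different route.
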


\begin{proof}
Consider an image simple fold
map $f \co M \to S$. Then, $f(\Sigma(f))$ is a finite disjoint
union of embedded closed curves in $S$, and we denote
by $R_1, R_2, \ldots, R_s$ the connected components of
$S \setminus f(\Sigma(f))$.
We also denote by $\chi_i \in \Z$ the Euler characteristic
of the fiber over a point in $R_i$, $i = 1, 2, \ldots, s$.
When $R_i$ is not contained in the image of $f$, we set $\chi_i = 0$.
Note that the regular fibers of $f$ are of even dimension $m-2$
and that the parity of their Euler characteristics
is a homotopy invariant of $f$. We denote it by $\chi_f \in \Z/2\Z$.
Set $r_f =0 \in \Z/4\Z$ if $\chi_f=0$ and set $r_f = 1 \in \Z/4\Z$
otherwise.

Let us consider a point in $S$ crossing a component of $f(\Sigma(f))$.
Then the Euler characteristic of the corresponding fiber always
changes by $\pm 2$. Let $\tilde{R}_0$ (resp.\ $\tilde{R}_1$)
be the union of $R_i$'s such that $\chi_i \equiv r_f \pmod{4}$
(resp.\ $\chi_i \equiv r_f+2 \pmod{4}$).
Note that $S \setminus f(\Sigma(f))$ is the disjoint
union $\tilde{R}_0 \cup \tilde{R}_1$. 

First suppose that $S$ is a closed connected orientable surface.
Then, we have
\begin{eqnarray*}
\chi(M) & = & \sum_{i=1}^s \chi_i \chi(R_i) \\
& \equiv & r_f \chi(\tilde{R}_0) + (r_f+2) \chi(\tilde{R}_1) \pmod{4} \\
& \equiv & r_f \chi(\tilde{R}_0) + (r_f+2) (\chi(S) - \chi(\tilde{R}_0) \pmod{4} \\
& \equiv & 2\chi(\tilde{R}_0) + (r_f+2)\chi(S) \pmod{4}.
\end{eqnarray*}
As the closure of $\tilde{R}_0$ is a compact orientable surface
with boundary $f(\Sigma(f))$, we see that $2\chi(\tilde{R}_0)
\equiv 2\#|\Sigma(f)| \pmod{4}$.
Since $\chi(M)$ and $(r_f + 2)\chi(S) \pmod{4}$ are homotopy invariants of $f$,
we see that the parity of $\#|\Sigma(f)|$ is a homotopy
invariant of $f$.

In the general case where $S$ may not be closed,
for two homotopic image simple fold maps $f$ and $g \co M \to S$,
we consider an appropriate compact sub-surface $\bar{S}$ of $S$
that contains the image of the homotopy between $f$ and $g$.
Let $\widehat{S}$ be the closed orientable surface
obtained by attaching $2$-discs to the boundary circles of $\bar{S}$.
Then, for image simple fold maps $f$ and $g \co M \to \widehat{S}$,
the above argument implies that $\#|\Sigma(f)| \equiv
\#|\Sigma(g)| \pmod{2}$.

This completes the proof.
\end{proof}

As a corollary to the above proof,
we have the following.

\begin{corollary}
Let $M$ be an even-dimensional closed manifold of dimension $m \geq 2$
and $S$ a connected orientable surface. 
Suppose that $M$ admits an image simple fold map 
$f\co M \to S$. Then $\chi(M)$ is even. 
If $S$ is not closed, then we have
$$\#|\Sigma(f)| \equiv \frac{\chi(M)}{2} \pmod{2}.$$
If $S$ is closed, then $\chi(S)$
is also even, and we have
$$\#|\Sigma(f)| \equiv \frac{\chi(M)}{2} + \chi_f\frac{\chi(S)}{2} \pmod{2},$$
where $\chi_f =0 \in \Z/2\Z$ if the regular fibers of $f$
have even Euler characteristics, and $\chi_f = 1 \in \Z/2\Z$
otherwise.
\end{corollary}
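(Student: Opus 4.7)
The plan is to look inside the proof of Theorem~\ref{thm:even} and read off an explicit mod-$4$ identity rather than only its mod-$2$ consequence. The central computation there produces, for any closed connected orientable surface $\widehat{S}$ containing (or equal to) the image of $f$, the congruence
\[
\chi(M)\equiv 2\#|\Sigma(f)|+(r_f+2)\chi(\widehat{S})\pmod{4}.
\]
Since $\widehat{S}$ is closed orientable, $\chi(\widehat{S})=2-2g$ is even, so the right-hand side is even; hence $\chi(M)$ is even. Dividing by $2$ gives
\[
\#|\Sigma(f)|\equiv \frac{\chi(M)}{2}+\frac{(r_f+2)\chi(\widehat{S})}{2}\pmod{2},
\]
and a short check shows that $\frac{(r_f+2)\chi(\widehat{S})}{2}\equiv \chi_f\frac{\chi(\widehat{S})}{2}\pmod{2}$, because in both admissible pairs $(\chi_f,r_f)=(0,0)$ and $(1,1)$ one has $r_f+2-\chi_f=2$, so the two expressions differ by $\chi(\widehat{S})$, which is even.

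To deduce the two cases of the statement I would specialise the choice of $\widehat{S}$. When $S$ is closed I just set $\widehat{S}=S$, and the displayed congruence becomes the advertised formula, with $\chi(S)$ automatically even. When $S$ is not closed I would choose $\widehat{S}$ to be a closed orientable surface of \emph{odd} genus containing $f(M)$, which is always available: take a compact orientable subsurface $\bar S\subset S$ with $f(M)\subset\mathrm{Int}\,\bar S$, cap off the boundary circles of $\bar S$ with discs, and, if necessary, add a handle to make the genus odd. With this choice $\chi(\widehat{S})/2=1-g$ is even, so $\chi_f\chi(\widehat{S})/2\equiv 0\pmod{2}$, and the formula collapses to $\#|\Sigma(f)|\equiv\chi(M)/2\pmod{2}$.

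The only bookkeeping obligation is that passing from $S$ to $\widehat{S}$ leaves $\#|\Sigma(f)|$ and $\chi_f$ unchanged, which is clear since both are intrinsic to $f\co M\to S$ and depend only on what happens inside $f(M)\subset\bar S$; capping discs and added handles contribute nothing to either. I do not foresee any further obstacle, as the essential combinatorial and topological work was already carried out in the proof of Theorem~\ref{thm:even}.
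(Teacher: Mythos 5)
Your proposal is correct and follows the same basic strategy the authors intend by the phrase ``as a corollary to the above proof'': read the mod-$4$ congruence
\[
\chi(M)\equiv 2\#|\Sigma(f)|+(r_f+2)\chi(\widehat S)\pmod 4
\]
off the displayed computation, observe the right side is even, divide by two, and translate $r_f$ into $\chi_f$. The algebra in your translation step is right, and taking $\widehat S=S$ when $S$ is closed gives the stated formula.

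The one place you diverge from the likely intended route is the non-closed case, and it is worth flagging a small subtlety there. You handle it by choosing $\widehat S$ of \emph{odd} genus so that $\chi(\widehat S)/2$ is even and the $\chi_f$-term drops out. That works, but it quietly leans on the mod-$4$ identity holding for the pair $(f,\widehat S)$ in the first place. The paper's derivation of that identity uses that $\widehat S\setminus f(\Sigma(f))$ is exhausted by $\tilde R_0\cup\tilde R_1$, and with the convention $\chi_i=0$ on regions outside $f(M)$, that exhaustion only makes sense when $r_f=0$ (equivalently $\chi_f=0$) or when $f$ is surjective. When $S$ is not closed, $f(M)$ is a proper compact subsurface, so there \emph{is} an outside region with $\chi_i=0$; consequently every adjacent region has even fiber Euler characteristic, and propagating across folds (each crossing changes $\chi_i$ by $\pm 2$) forces all $\chi_i$ even, i.e.\ $\chi_f=0$. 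So the cleaner argument is simply: \emph{$S$ not closed $\Rightarrow$ $f$ not surjective $\Rightarrow$ $\chi_f=0$}, and then $\chi_f\chi(\widehat S)/2\equiv 0$ for \emph{any} choice of $\widehat S$, no genus adjustment needed. Your odd-genus trick reaches the same conclusion, but the observation that $\chi_f=0$ is both shorter and removes the need to worry about the domain of validity of the mod-$4$ identity. You might add one sentence making this observation explicit; otherwise the proof is fine.
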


\begin{remark}\label{rem:surface}
In the case of an image simple fold map $f$ of a closed orientable surface $M$ to $\R^2$, the components of 
$\Sigma(f)$ can be signed so that the algebraic number of components of $\Sigma(f)$ is 
equal to $\chi(M)/2$. 
Specifically, every component of $f(\Sigma(f))$ bounds a region (actually, a $2$-disc) in $\R^2$. 
Given a component $\gamma$ of $f(\Sigma(f))$, if the number of points
in the preimage of a point in $\R^2$ near $\gamma$ in the bounded region is greater than 
that of a nearby point in the unbounded region, then we say that $\gamma$ is positive; otherwise it is negative.
Suppose $\Sigma(f)$ has $n$ negative components and $p$ positive components. 
We can perform surgeries of index $2$ along the $n$ negative components;
in other words, for each such component, we remove an annular neighborhood 
of the corresponding singular point set component in $M$, and attach two $2$-discs
on which we consider the natural embedding into $\R^2$.
In this way, we can construct an image 
simple fold map of the resulting surface $M'$ with $p$ positive components. Then $M'$ is a disjoint union of 
$p$ copies of the $2$-sphere, and the Euler characteristic of $M' $ is equal to $2p = \chi(M) + 2n$. 
Thus, for the map $f$, the algebraic number of fold components is precisely equal to $\chi(M)/2$. 

Let us now show that even if we allow $g$ not to be image simple, there exists no fold map $g \co S^2 \to \R^2$
with $\# |\Sigma(g)|$ even.
Let $f \co M \to \R^2$
be a fold map of a closed orientable surface $M$ to the plane. Then, $\Sigma(f)$
splits $M$ into compact orientable (possibly disconnected) surfaces $M_+$ and $M_-$ with boundary,
where $f|_{M_+}$ (or $f|_{M_-}$) is orientation preserving (resp.\ reversing).
By \cite{E}, their Euler characteristics coincide. Since
$\chi(M_+) + \chi(M_-) = \chi(M)$ and $\chi(M)$ is even,
we see that $\chi(M_+) = \chi(M_-) = \chi(M)/2$.
Now, we have $\partial M_+ = \partial M_- = \Sigma(f)$ and the number of boundary 
components of $M_+$ (or $M_-$) has the same parity as $\chi(M_+)$  (resp.\ $\chi(M_-)$),
since they are orientable. Therefore, $\# |\Sigma(f)|$ has the same parity as $\chi(M)/2$,
even if $f$ may not be image simple.
\end{remark}

In the case where the target surface is non-orientable,
Theorem~\ref{thm:even} does not hold in general.
Let us construct an explicit example as follows.

\begin{example}
Let $w$ and $z \co S^1 \to \R$ be two Morse functions such that
(refer to Fig.~\ref{fig5})
\begin{itemize}
\item $w$ has exactly two critical points $a_1$ and $a_2$
of indices $0$ and $1$, respectively, with $w(a_1) = -1$ and
$w(a_2) = 1$,
\item $z$ has exactly four critical points $b_1, b_2, b_3$ and $b_4$
of indices $0, 0, 1$ and $1$, respectively, with $z(b_1) = -1$,
$z(b_2) = -1/2$, $z(b_3) = 1/2$ and $z(b_4) = 1$,
\item there exists an orientation reversing involution $\sigma \co S^1 \to S^1$
such that $w \circ \sigma = -w$, $\sigma(a_1) = a_2$
and $\sigma(a_2) = a_1$,
\item there exists an orientation reversing involution $\tau \co S^1 \to S^1$
such that $z \circ \tau = -z$, $\tau(b_1) = b_4$, $\tau(b_2) = b_3$,
$\tau(b_3) = b_2$ and $\tau(b_4) = b_1$.
\end{itemize}

\begin{figure}[t]
\centering
\includegraphics[width=\linewidth,height=0.5\textheight,
keepaspectratio]{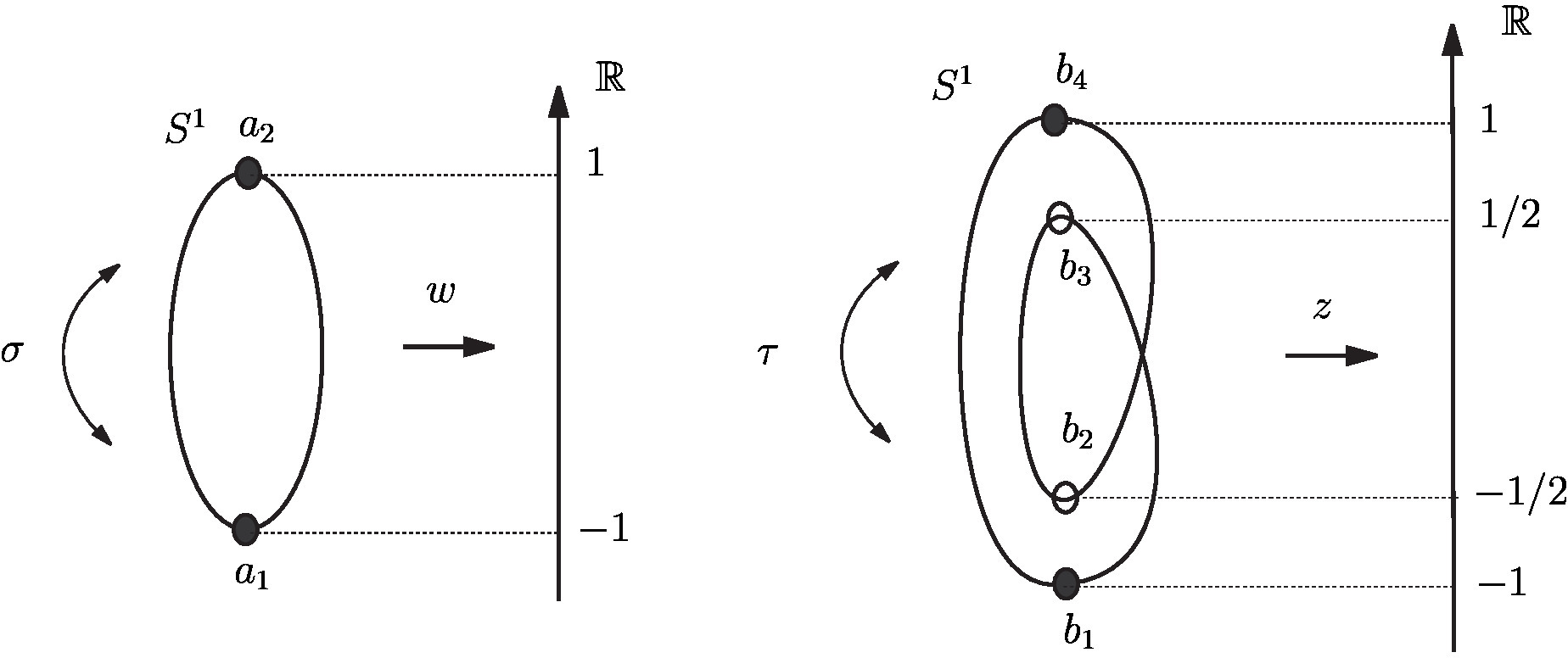}
\caption{Two Morse functions on $S^1$}
\label{fig5}
\end{figure}

Then, $w$ induces a fold map
$$\mathrm{id}_{[0, 1]} \times w \co
[0, 1] \times S^1 \to [0, 1] \times \R,$$
which further induces a fold map
$$\tilde{w} = \mathrm{id}_{S^1} \tilde{\times} w \co S^1 \tilde{\times} S^1
\to S^1 \tilde{\times} \R,$$
where $\mathrm{id}_{[0, 1]}$ is the identity map of the interval
$[0, 1]$, $S^1 \tilde{\times} S^1$ is obtained from $[0, 1] \times S^1$
by identifying $(1, x)$ with $(0, \sigma(x))$ for each $x \in S^1$,
and $S^1 \tilde{\times} \R$ is obtained from $[0, 1] \times \R$
by identifying $(0, t)$ with $(1, -t)$ for each $t \in \R$.
Similarly, we get a fold map
$$\tilde{z} = \mathrm{id}_{S^1} \tilde{\times} z \co S^1 \tilde{\times} S^1
\to S^1 \tilde{\times} \R.$$
We see that $\tilde{w}$ and $\tilde{z}$ are image simple
fold maps from the Klein bottle to the open M\"{o}bius band $\mathcal{M}$
and that $\sharp|\Sigma(\tilde{w})| = 1$, while
$\sharp|\Sigma(\tilde{z})| = 2$. On the other hand,
since $w$ and $z$ are null-homotopic, we see that
$\tilde{w}$ and $\tilde{z}$ are homotopic.
Hence, the parity of the number of singular point set
components is not a homotopy invariant for the image
simple fold maps $\tilde{w}$ and $\tilde{z}$.
\end{example}

The above construction can be generalized to produce functions $w$ and $z\co S^{2n-1}\to \R$,
$n \geq 1$,
that lead to image simple fold maps $\tilde{w} = \id_{S^1} \tilde\times w$ and 
$\tilde{z} = \id_{S^1} \tilde\times z$ of 
the $2n$-dimensional Klein bottle $S^1 \tilde\times S^{2n-1}$ into the open M\"obius band 
$\mathcal{M} = S^1 \tilde\times \R$ such that $\tilde{w}$ has a single fold component, 
while $\tilde{z}$ has two fold components, as follows.

The function $w$ is the standard height function on the unit sphere $S^{2n-1}\subset \R^{2n}$, 
where $n\ge 1$, i.e., $w$ is the restriction of the last coordinate function $x_{2n}$ in $\R^{2n}$ to $S^{2n-1}$. 
Let $\sigma \co S^{2n-1} \to S^{2n-1}$ be the orientation reversing involution defined by
the reflection with respect to the coordinate hyperplane $x_{2n} = 0$. Note that
$w \circ \sigma = -w$.

To define the function $z$, let us observe that $S^{2n-1}$ can be obtained from the disjoint union 
of two solid tori $S^{n-1}\times D^{n}$ and $D^{n}\times S^{n-1}$ by identifying their boundaries 
via the identity map:
\[
  \partial(S^{n-1}\times D^{n})=S^{n-1}\times S^{n-1}
  \stackrel{\id}\longrightarrow S^{n-1}\times S^{n-1}=\partial(D^{n}\times S^{n-1}).
\]  
Here, in order to get $S^{2n-1}$ which is oriented, we need to orient
$S^{n-1}\times D^{n}$ and $D^{n}\times S^{n-1}$ in such a way that the identity
map as above should be orientation reversing.

Since the solid torus $S^{n-1}\times D^{n}$ is a union of a $0$-handle and an $(n-1)$-handle, 
this description yields a handle decomposition of $S^{2n-1}$ that consists of four handles of 
indices $0$, $n-1$, $n$ and $2n$ respectively. In turn, such a handle decomposition gives 
rise to an associated Morse function $z$ on $S^{2n-1}$. There is an involution $\tau$ on 
$S^{2n-1}$ that swaps the two solid tori $S^{n-1}\times D^{n}$ and $D^{n}\times S^{n-1}$. 
By the above remark, $\tau$ should be orientation reversing. Furthermore,
we can choose $z$ so that $z\circ \tau=-z$. 
Then, the rest of the argument is parallel to the above argument.

This proves Theorem~\ref{thm:main2}.

We can construct a generic homotopy between the image
simple fold maps $\tilde{w}$ and $\tilde{z} \co S^1 \tilde{\times} S^{2n-1}
\to \mathcal{M}$ as follows. We first apply the birth move (see
\cite[Fig.~5]{Sae25}) to $\tilde{w}$ in such a way that we have
a wrinkle consisting of 
two cusps together with fold arcs of indices $n-1$ and $n$ near the center circle of $\mathcal{M}$, i.e.,
inside the region $R$ bounded by $\mathcal{M} \setminus \tilde{w}(\Sigma(\tilde{w}))$.
Note that if we reverse the normal direction, the indices turn to $n$ and $n-1$.
Then, we merge the two cusps using a path connecting them in the
source manifold whose image is embedded in $R$.
This is possible for $n > 1$, since the fibers over the points in $R$
are connected. For $n=1$, the fibers are not connected: however,
we can use a similar cusp elimination technique as in \cite{Millett}.
Then, we get an image simple fold map equivalent to $\tilde{z}$.
The singular point set of the (generic) homotopy thus constructed
is the disjoint union of a M\"{o}bius band and a cylinder,
and it has no triple self-intersections.

\section*{Acknowledgment}\label{ack}
The authors would like to express their deep gratitude
and respect, with sadness, to Professor Maria Aparecida Soares Ruas, Cidinha,
for her continual encouragement for pursuing research,
especially in singularity theory and topology.
This paper was respectfully written responding to her kind request
to write something of geometric topology.

The authors would like to thank David Gay
who kindly explained to the authors how to construct generalized
Dehn twists. 
They would like to thank Sergey Melikhov for 
a discussion which led to Remark~\ref{rem:surface}.
They are also thankful to Liam Kahmeyer for discussions of 
the topic.
They would also like to thank the
anonymous referee for valuable comments which improved the
presentation of the paper. The second author would also like to thank 
the Department of Mathematics, Kansas State University,
for the hospitality
during the preparation of the manuscript.

This work has been partially supported by JSPS KAKENHI Grant Numbers 
\linebreak
JP22K18267, JP23H05437.

\end{document}